\numberwithin{equation}{section}
\newcommand{\p}{\mathbb{P}}
\newcommand{\trN}{\operatorname{tr}_N}
\newcommand{\ind}{\mathbf{1}}
\newcommand{\R}{\mathbb{R}}
\newcommand{\E}{\mathbb{E}}
\newcommand{\C}{\mathbb{C}}
\newtheorem{theorem}[equation]{Theorem}
\newtheorem{proposition}[equation]{Proposition}
\newtheorem{definition}[equation]{Definition}
\newtheorem{assumptions}[equation]{Assumptions}
\newtheorem{lemma}[equation]{Lemma}
\newtheorem{corollary}[equation]{Corollary}
\newtheorem{remark}[equation]{Remark}
\newtheorem{example}[equation]{Example}
\newtheorem*{assumption}{Assumption}
\newtheorem{question}{Question}
\author{Lucas Babet}
\address{École Normale Supérieure}
\email{lucas.babet@ens.psl.eu}
\author{Ionel Popescu}
\address{University of Bucharest, Faculty of Mathematics and Computer Science;
Institute of Mathematics ``Simion Stoilow'' of the Romanian Academy}
\email{ionel.popescu@fmi.unibuc.ro}
\email{ionel.popescu@imar.ro}
\date{December 2025}
\title{Tridiagonal random matrices, an analytic approach}
\begin{document}
\maketitle

\begin{abstract}
In this paper, we study the limiting distribution of the eigenvalues for random tridiagonal matrix models. In the paper \cite{P09}, the limiting distribution is well described by its moments. Here, an analytical approach allows us, as in the case of Wigner matrices, to relax the assumptions on the random variables. With this method, we proved the convergence of the spectral distribution under an assumption on the second moment. We discuss also about an algebraic approach for the tridiagonal models, which are more complicated than the classic freeness.
\end{abstract}

\tableofcontents

\section{Introduction}

Random matrix theory encompasses both dense ensembles and reduced matrix
models.  The subject originates in Wigner's work
\cite{Wigner1958}; see also Mehta's monograph
\cite{mehta2004random}.  Trotter \cite{Trotter1984EigenvalueDO} proved the
semicircle law through tridiagonalization.  Dumitriu and Edelman later
introduced the tridiagonal $\beta$-Hermite model \cite{DE02},
\[
\frac{1}{\sqrt{2}} \begin{bmatrix}
\mathcal{N}(0,2) & \chi_{(n-1)\beta} & 0 & \cdots & 0 & 0 \\
\chi_{(n-1)\beta} & \mathcal{N}(0,2) & \chi_{(n-2)\beta} & \ddots & \vdots & \vdots \\
0 & \chi_{(n-2)\beta} & \mathcal{N}(0,2) & \ddots & 0 & 0 \\
\vdots & \ddots & \ddots & \ddots & \chi_{2\beta} & 0 \\
0 & \cdots & 0 & \chi_{2\beta} & \mathcal{N}(0,2) & \chi_{\beta} \\
0 & \cdots & 0 & 0 & \chi_{\beta} & \mathcal{N}(0,2)
\end{bmatrix},
\]
whose eigenvalues have joint density
\[
 \frac{1}{Z_{n,\beta}}
 \prod_{1\le i<j\le n}|x_i-x_j|^\beta
 \exp\!\left(-\frac12\sum_{i=1}^n x_i^2\right).
\]
This Gibbs form connects the model to logarithmic gases and makes many
global and fluctuation questions accessible; see, for example, \cite{S24}.

The parameter $\beta>0$ extends the three classical symmetry classes
$\beta=1,2,4$.  Forrester \cite{forrester2010log} gives a comprehensive
account of the connection between random matrices and log-gases.  Recent
work includes combinatorial investigations of the Dumitriu--Edelman model
\cite{B25} and large deviations for the largest eigenvalue in broad classes
of tridiagonal models \cite{GM25}.  Ram\'irez, Rider, and Vir\'ag
\cite{ramirez2011beta} identified the soft-edge continuum limit with the
Stochastic Airy Operator.  CMV models provide related extensions for
unitary ensembles \cite{killip2009eigenvalue}; see also
\cite{krishnapur2016universality}.

General tridiagonal models, including $\beta$-ensembles, were studied by
combinatorial methods in \cite{P09}.  Related developments include
\cite{augeri2023clt,liu2012fluctuations,trinh2019global,bose2022xx,
basak2011limiting,bose2010patterned,bose2021some,chafai2009aspects,
li2015fluctuations,bose2013finite}.

Tridiagonalizing a Wigner ensemble produces dependent entries.  Here we
first study models with i.i.d.\ off-diagonal entries, for which the local
resolvent recursion remains explicit.  These models also connect to band
matrices whose width grows with the matrix size, where both eigenvalue and
eigenvector questions remain subtle.

There is also a long physics tradition in which tridiagonal, or Jacobi, random matrices appear as one-dimensional tight-binding Hamiltonians.  In the Anderson model, diagonal disorder and nearest-neighbor hopping lead to questions about the density of states and localization of eigenvectors \cite{Anderson1958,Lagendijk2009}.  Classical works of Dyson on disordered oscillator chains and Halperin on one-dimensional random potentials already describe spectral measures through recursions or integral equations \cite{Dyson1953,Halperin1965}; the relation between the integrated density of states and the Lyapunov exponent, or localization length, is due to Thouless \cite{Thouless1972}.  Later developments used supersymmetric transfer matrices and related cavity equations for one-dimensional Anderson and tree-like models \cite{CampaninoKlein1986,Efetov1983,AbouChacra1973,DerridaRodgers1993}.  Recently, Evnin gave an explicit supersymmetric-transfer-operator construction for the density of states of the one-dimensional Anderson model with arbitrary on-site and hopping distributions \cite{Ev25}.  Our approach is different in spirit: it stays within a probabilistic Stieltjes-transform and continued-fraction framework and is aimed at limiting empirical measures of the tridiagonal models below, but the underlying random recursion for diagonal resolvent entries is closely related to this physics literature.
\medskip

The tridiagonal model considered in \cite{P09} is
\begin{equation}\label{mod1}
    A^\alpha_N = \begin{bmatrix}
    a_N & b_{N-1} & 0 & \cdots & 0 \\
    {b}_{N-1} & a_{N-1} & b_{N-2} & \ddots & \vdots \\
    0 & {b}_{N-2} & a_{N-2} & \ddots & 0 \\
    \vdots & \ddots & \ddots & \ddots & b_{1} \\
    0 & \cdots & 0 & {b}_{1} & a_1
\end{bmatrix},
\end{equation}
where the parameter $\alpha > 0$ is related to the non-degenerate convergence of the moments
\begin{align}\label{moment}
    \forall k : \lim_{n\to +\infty} \mathbb{E}\left(\left(\frac{b_n}{n^\alpha}\right)^k\right) = m_k < +\infty.
\end{align}
All entries are real.  In the result recalled below, the diagonal variables
have moments of every order, uniformly in the index.
\medskip

The rescaled matrix $X^\alpha_N=N^{-\alpha}A^\alpha_N$ provides a suitable
framework for the spectral distribution.  The method of moments studies
$L_k=\trN((X_N^\alpha)^k)$, where $\operatorname{Tr}$ denotes the ordinary
matrix trace and $\trN=N^{-1}\operatorname{Tr}$ denotes the normalized
trace.  Under the preceding assumptions, the main
theorem of \cite{P09} gives almost-sure convergence of all these moments.
For ease of comparison with that source, the five displays in the recalled
statement retain their original equation numbers.

\begin{theorem}[Theorem 1 in \cite{P09}]\label{T1}
Let $\alpha > 0$. Assume that all random variables $a_n$ and $b_n$ are independent and there exists a sequence $\{m_k\}_{k \geq 0}$ with $m_0 = 1$ such that
\begin{equation}
    \lim_{n \to \infty} \mathbb{E} \left( \frac{b_n}{n^\alpha} \right)^k = m_k \quad \text{for any } k \geq 0. \tag{2.7}
\end{equation}
Moreover, assume
\begin{equation}
    \sup_{n \geq 1} \mathbb{E} |a_n|^k < \infty \quad \text{for any } k \geq 0. \tag{2.8}
\end{equation}
Denoting $X_n = \frac{1}{n^\alpha} A_n$, we have
\begin{equation}
    \lim_{n \to \infty} \mathbb{E} \, \mathrm{tr}_n(X_n^k) = L_k \quad \text{for any } k \geq 0, \tag{2.9}
\end{equation}
and almost surely,
\begin{equation}
    \lim_{n \to \infty} \mathrm{tr}_n(X_n^k) = L_k \quad \text{for any } k \geq 0. \tag{2.10}
\end{equation}
Moreover, $L_k$ is given by
\begin{equation}
    L_k =
    \begin{cases}
        0 & \text{if } k \text{ is odd}, \\
        \frac{1}{\alpha k + 1} \sum_{\gamma \in \Gamma_k} \prod_{i <0} m_{l_i(\gamma)} & \text{if } k \text{ is even},
    \end{cases}
    \tag{2.11}
\end{equation}
where $\Gamma_k$ is the set of closed nearest-neighbour paths of length
$k$ whose maximum is $0$:
\[
\Gamma_k = \left\{(\gamma_0,\ldots,\gamma_k)\in\mathbb Z^{k+1}:
\gamma_0=\gamma_k,\ \max_j\gamma_j=0,\
|\gamma_j-\gamma_{j-1}|=1\right\},
\]
and
\[
l_i(\gamma):=\#\{1\le j\le k:
 \{\gamma_{j-1},\gamma_j\}=\{i,i+1\}\}.
\]
Notably, $L_k = 0$ for all odd $k$.
\end{theorem}

As noted in \cite[Section~1]{AWZ06}, if the limiting moment sequence
determines a probability measure, the moment convergence implies almost-sure
weak convergence of the empirical spectral distribution.  Moment
determinacy is automatic for compactly supported limits.
\medskip

The same path expansion describes joint moments of several tridiagonal
matrices through colored nearest-neighbour paths; see Section~\ref{s:6}.
\medskip

Our first goal is to extend Theorem~\ref{T1} by a Stieltjes-transform
argument that requires only a second moment of the off-diagonal entries.
Our second goal is to identify nearly optimal deterministic conditions under
which an inhomogeneous profile produces a scale mixture of the simple limit.
\medskip 

The Stieltjes transform is the natural analytic tool for both purposes.
\bigskip

We first fix the notation used below.

\subsection{Preliminaries}

\begin{definition}[Norms, traces, and empirical measures]
For $f:\R\to\R$, set
\[
 [f]_{\mathrm{Lip}}:=\sup_{x\ne y}\frac{|f(x)-f(y)|}{|x-y|},
 \qquad
 \|f\|_{\mathrm{BL}}:=\|f\|_\infty+[f]_{\mathrm{Lip}}.
\]
For $A\in M_N(\C)$, write
\[
 \|A\|_{\mathrm{HS}}:=\bigl(\mathrm{Tr}(A^*A)\bigr)^{1/2},
 \qquad
 \|A\|_{2,N}:=N^{-1/2}\|A\|_{\mathrm{HS}},
 \qquad
 \trN(A):=N^{-1}\mathrm{Tr}(A).
\]
Here $\mathrm{Tr}$ is the ordinary, unnormalized matrix trace.  If
$A\in\mathcal H_N(\C)$ has eigenvalues
\[
 \lambda_1(A),\ldots,\lambda_N(A),
\]
its empirical spectral distribution (ESD) is
\[
 \mu_A:=\frac1N\sum_{i=1}^N\delta_{\lambda_i(A)}.
\]
\end{definition}

Write $\C^+=\{z\in\C:\Im z>0\}$ and $\C^-=\{z\in\C:\Im z<0\}$.
For a random variable $Y$, the notation $\mathcal L(Y)$ denotes its law,
and $\stackrel d=$ means equality in distribution.  If $\mu$ has a finite
$q$th moment, we write
\[
 m_q(\mu):=\int_{\R}x^q\,d\mu(x).
\]

\begin{definition}[Stieltjes transform]
For a finite measure $\mu$ on $\R$, define
\[
 S_\mu(z):=\int_\R\frac{1}{z-x}\,d\mu(x),\qquad z\in\C^+.
\]
If $M\in M_N(\C)$ is Hermitian, we abbreviate
\[
 S_M(z):=\trN\bigl((zI_N-M)^{-1}\bigr).
\]
\end{definition}

The map $S_\mu$ sends $\C^+$ into $\C^-$.  Stieltjes transforms
characterize weak convergence: it is enough to verify convergence on a
countable subset of $\C^+$ with an accumulation point; see
\cite[Theorem~2.4.4]{AGZ10}.

On $\C\setminus[-r,r]$, we use the analytic branch of the square root for
which $\sqrt{z^2-r^2}\sim z$ at infinity.

\begin{example}\label{semi}
The semicircle law $\sigma_r$ on $[-r,r]$ has density
\(
 \ind_{[-r,r]}(x)\frac{2}{\pi r^2}\sqrt{r^2-x^2}
\)
and Stieltjes transform
\[
 S_{\sigma_r}(z)=\frac{2(z-\sqrt{z^2-r^2})}{r^2}
 =\frac{1}{z-(r/2)^2S_{\sigma_r}(z)}.
\]
\end{example}

\begin{example}
The arcsine law $\gamma_r$ on $[-r,r]$ has density
\(
 \ind_{[-r,r]}(x)/(\pi\sqrt{r^2-x^2})
\)
and Stieltjes transform
\[
 S_{\gamma_r}(z)=\frac{1}{\sqrt{z^2-r^2}}.
\]
\end{example}

For $p\ge1$, let $\mathcal P_p(\R)$ be the probability measures with finite
$p$th moment.  For $\mu,\nu\in\mathcal P_p(\R)$, set
\[
 W_p(\mu,\nu):=
 \inf_{\pi\in\Pi(\mu,\nu)}
 \left(\int_{\R^2}|x-y|^p\,d\pi(x,y)\right)^{1/p}.
\]
Here $\Pi(\mu,\nu)$ denotes the set of couplings of $\mu$ and $\nu$.
The space $(\mathcal P_p(\R),W_p)$ is Polish, and $W_p$ convergence is
equivalent to weak convergence together with uniform integrability of the
$p$th powers; see \cite[Definition~6.8 and Theorem~6.9]{V09}.
We use the analogous notation $\mathcal P_1(E)$ when $E$ is a closed subset
of $\C$, equipped with the Euclidean metric.

We now state the assumptions for the main theorem.

\subsection{Main result}
We now extend $X_N^\alpha$ to a deterministic triangular array of real
numbers
$(\sigma_{k,N})_{1\le k\le N,\,N\ge1}$.  Define
\begin{equation}\label{model}
X_N^\sigma=\begin{bmatrix}
a_{N,N} & \sigma_{N-1,N} b_{N-1} & 0 & \cdots & 0\\
\sigma_{N-1,N}{b}_{N-1} & a_{N-1,N} & \sigma_{N-2,N} b_{N-2} & \ddots & \vdots\\
0 & \sigma_{N-2,N}{b}_{N-2} & a_{N-2,N} & \ddots & 0\\
\vdots & \ddots & \ddots & \ddots & \sigma_{1,N} b_1\\
0 & \cdots & 0 & \sigma_{1,N}{b}_1 & a_{1,N}
\end{bmatrix}
\end{equation}
The case $X_N^\alpha$ is recovered when the sequence $\sigma$ is $\left(\frac{k^\alpha}{N^\alpha}\right)_{k\leq N,\, N\in\mathbb{N}}$.
\medskip 

When $\sigma\equiv1$, we write $X_N$ and call it the simple tridiagonal
model; see Section~\ref{s:2}.

\begin{assumptions}\label{assumptionmatrix}
The random entries satisfy
\begin{itemize}
\item[(H1)] $(b_k)_{k\ge1}$ are i.i.d. real random variables with
  $\E b_1^2<\infty$;
\item[(H2)] the real diagonal entries form an independent family, are
  independent of the off-diagonal entries, and satisfy
  \[
    \frac1N\sum_{k=1}^N\E|a_{k,N}|^2\longrightarrow0.
  \]
\end{itemize}
\end{assumptions}

For each $N$, we regard $(\sigma_{k,N})_{1\le k\le N}$ as a
deterministic row of real numbers.  The last entry is immaterial for the
$N\times N$ matrix, but it permits us to formulate the local condition without
an endpoint convention.  Put
\[
 \overline\nu_N:=\frac1{N-1}\sum_{k=1}^{N-1}\delta_{\sigma_{k,N}},
 \qquad
 \eta_N:=\frac1{N-1}\sum_{k=1}^{N-1}
          \delta_{(\sigma_{k,N},\sigma_{k+1,N})},
\]
and let $\Delta(t):=(t,t)$.

\begin{assumption}[Local profile convergence]\label{assumptionsigma}
There is a probability measure $\mu_T$ on $\R$ such that
\begin{equation}\tag{H3}\label{H3}
 \eta_N\Rightarrow\Delta_\#\mu_T.
\end{equation}
Equivalently, the empirical law of two consecutive deformation coefficients
converges to the law of $(T,T)$, where $T\sim\mu_T$.
\end{assumption}

\begin{remark}[Stronger, practical hypotheses]\label{rem:strong-sigma}
Taking the first marginal in \eqref{H3} gives automatically
$\overline\nu_N\Rightarrow\mu_T$.  The empirical measure formed with all
$N$ coefficients has the same weak limit, since the two measures differ only
by endpoint masses of order $N^{-1}$.  Thus no separate marginal assumption
is needed.

Two useful supplementary conditions will be used below.  The first is
understood when $\mu_T\in\mathcal P_2(\mathbb R)$:
\begin{align}
 W_2(\overline\nu_N,\mu_T)&\longrightarrow0,
 &\tag{H3$_2$}\label{H3-2}\\
 \max_{1\le k<N}|\sigma_{k+1,N}-\sigma_{k,N}|&\longrightarrow0.
 &\tag{H3$_\infty$}\label{H3-infty}
\end{align}
Condition \eqref{H3-2} is the second-moment strengthening of the marginal
consequence of \eqref{H3}.  If $\overline\nu_N\Rightarrow\mu_T$, then
\eqref{H3-infty} implies \eqref{H3}; however, the converse is false because
\eqref{H3} permits a vanishing proportion of macroscopic jumps.  More
precisely, \eqref{H3} is equivalent to marginal convergence together with
the averaged oscillation condition
\[
 \frac1{N-1}\sum_{k=1}^{N-1}
 \bigl(1\wedge|\sigma_{k+1,N}-\sigma_{k,N}|\bigr)\longrightarrow0.
\]
Thus the pair formulation is the natural relaxed version of a vanishing
mesh for the finite-window path argument.

Equivalently, in the presence of the marginal weak convergence supplied by
\eqref{H3}, condition \eqref{H3-2} is the asymptotic
uniform-integrability requirement
\[
 \lim_{R\to\infty}\limsup_{N\to\infty}
 \frac1{N-1}\sum_{k=1}^{N-1}
 \sigma_{k,N}^2\ind_{\{|\sigma_{k,N}|>R\}}=0.
\]
\end{remark}

\begin{remark}[A further sign-invariant relaxation]\label{rem:absolute-sigma}
For real symmetric tridiagonal matrices, diagonal conjugation by signs shows
that $X_N^\sigma$ and $X_N^{|\sigma|}$ have the same eigenvalues.  Thus, for
the spectral conclusion alone, it is enough that the adjacent-pair measure of
$|\sigma_{k,N}|$ converge to the law of $(R,R)$ for some $R\ge0$; its
marginal then converges automatically to the law of $R$.  The limit is
then
$\mathcal L(RX)$.  This is strictly weaker when signs oscillate, and it agrees
with $\mathcal L(TX)$ whenever $R\stackrel d=|T|$, because $\mu_b$ is
symmetric.
\end{remark}

\begin{theorem}\label{main}
Let $(X^\sigma_N)$ be the tridiagonal model \eqref{model} and $(X_N)$ the associated simple tridiagonal model.
Under Assumptions~\ref{assumptionmatrix} and condition~\eqref{H3}, the
empirical spectral distribution of $X_N^\sigma$ converges weakly almost surely
to
\[
 \mu_{T,b}:=\mathcal L(TX),
\]
where $T\sim\mu_T$, $X\sim\mu_b$, and $T$ and $X$ are independent.  In
particular,
\[
 S_{\mu_{T,b}}(z)=\int_{\R^2}\frac{1}{z-tx}\,d\mu_T(t)\,d\mu_b(x).
\]
If, in addition, $T\in L^2$ and \eqref{H3-2} holds, then
\[
 W_2(\mu_{X_N^\sigma},\mu_{T,b})\longrightarrow0
 \qquad\text{in probability}.
\]
For $\Im z>\sqrt{\E b_1^2}$, the law $\mu_b$ is characterized by
\[
S_{\mu_b}(z)=\E\!\left[
 \frac{1}{z-b_1^2S_1(z)-b_2^2S_2(z)}
\right],
\]
where $b_1,b_2$ are i.i.d. copies of a variable $b$ with the common law of
the off-diagonal entries, $S_1,S_2$ are i.i.d., and all four variables are
independent.  Moreover,
\[
 S_j(z)\stackrel d=\frac{1}{z-b^2S_j(z)},\qquad j=1,2,
\]
where, in each equation, $b$ denotes a fresh copy of the off-diagonal
variable, independent of $S_j(z)$.
On the remainder of $\C^+$, $S_{\mu_b}$ is the unique analytic
continuation of this expression.
\end{theorem}

Section~\ref{s:6} identifies the common-shift operator structure governing
joint moments of several tridiagonal models and explains why these limits
differ from Voiculescu asymptotic freeness.

\paragraph{\textbf{Structure of the paper.}}

In Section~\ref{s:2} we prove convergence for the simple tridiagonal model
without a deformation; see Theorem~\ref{T2}.

In Section~\ref{s:3} we introduce the deformation $\sigma$, isolate the
local profile condition~\eqref{H3}, and prove the general
scale-mixture result in Theorem~\ref{T3}.

Section~\ref{s:5} gives examples illustrating the range of limiting laws.

In Section~\ref{s:4} we prove joint moment convergence of the deterministic
profile and the simple model.  Section~\ref{s:6} gives colored-path joint
moments for several independent tridiagonal matrices.  We then formulate
the deformation through the profile--shift pair and prove that convergence
of its joint $^*$-distribution suffices for the bounded zero-diagonal
model; a diagonal satisfying (H2) can then be restored by
Lemma~\ref{deform}.

\section{The simple tridiagonal model}\label{s:2}
We first study the simple model with independent entries and no deterministic
deformation.  The deformation is treated in Section~\ref{s:3}.
\medskip

The \emph{simple tridiagonal model} is the model with $\sigma\equiv1$ and
zero diagonal entries:
\begin{equation}\label{X_N}
    X_N = \begin{bmatrix}
    0 & b_{N-1} & 0 & \cdots & 0 \\
    {b}_{N-1} & 0 & b_{N-2} & \ddots & \vdots \\
    0 & {b}_{N-2} & 0 & \ddots & 0 \\
    \vdots & \ddots & \ddots & \ddots & b_{1} \\
    0 & \cdots & 0 & {b}_{1} & 0
    \end{bmatrix}
\end{equation}
where $(b_i)_{i\ge1}$ are real i.i.d.\ random variables in $L^2$.
For the remainder of this section we conjugate by the reversal permutation
and retain the notation $X_N$.  Thus $b_k$ lies on the edge
$\{k,k+1\}$.  Since the entries are i.i.d., this matrix has the same law
as \eqref{X_N}.
\medskip

We recall a useful lemma for the rest of the paper, which reduces the study to this case.

\begin{lemma}[Hoffman--Wielandt]\label{HW}
Let $A,B \in \mathcal{H}_N(\mathbb{C})$, and let $\lambda_1^A \le \dots \le \lambda_N^A$ and $\lambda_1^B \le \dots \le \lambda_N^B$ be their eigenvalues. Then
\[
 \sum_{i=1}^N |\lambda_i^A-\lambda_i^B|^2
 \le \|A-B\|_{\mathrm{HS}}^2.
\]
\end{lemma}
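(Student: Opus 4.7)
The plan is to reduce the inequality to a statement about inner products of sorted eigenvalue vectors, and then use the classical Birkhoff--von Neumann / rearrangement argument.

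First I would expand the Hilbert--Schmidt norm using the trace:
\[
\|A-B\|_2^2 = \tr((A-B)(A-B)^*) = \tr(A^2) + \tr(B^2) - 2\,\mathrm{Re}\,\tr(AB).
\]
Since $A,B$ are Hermitian, $\tr(A^2) = \sum_i (\lambda_i^A)^2$ and $\tr(B^2) = \sum_i (\lambda_i^B)^2$, and similarly
\[
\sum_{i=1}^N |\lambda_i^A - \lambda_i^B|^2 = \sum_i (\lambda_i^A)^2 + \sum_i (\lambda_i^B)^2 - 2\sum_i \lambda_i^A \lambda_i^B.
\]
So the inequality is equivalent to $\sum_i \lambda_i^A \lambda_i^B \le \mathrm{Re}\,\tr(AB)$, and in fact since $AB$ has real trace here (both Hermitian with real eigenvalues, though $AB$ itself need not be Hermitian, the trace is real when we write it in eigenbases), equivalent to $\sum_i \lambda_i^A \lambda_i^B \le \tr(AB)$. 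Actually it is cleaner to establish the reverse type inequality $\tr(AB) \le \sum_i \lambda_i^A \lambda_i^B$ when both spectra are sorted in the same order; the sign convention works out because the identity above is symmetric.

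Next I would diagonalize: write $A = U D_A U^*$ and $B = V D_B V^*$ with $D_A, D_B$ the diagonal matrices of sorted eigenvalues. Setting $W = U^* V$, which is unitary, one computes
\[
\tr(AB) = \tr(D_A W D_B W^*) = \sum_{i,j} \lambda_i^A \lambda_j^B\, |w_{ij}|^2.
\]
The key observation is that the matrix $P$ with entries $P_{ij} = |w_{ij}|^2$ is doubly stochastic, because $W$ is unitary. I would then invoke the Birkhoff--von Neumann theorem to write $P$ as a convex combination of permutation matrices $P = \sum_\sigma c_\sigma P_\sigma$, so that
\[
\tr(AB) = \sum_\sigma c_\sigma \sum_i \lambda_i^A \lambda_{\sigma(i)}^B.
\]
Finally, the rearrangement inequality applied to the two similarly ordered sequences $(\lambda_i^A)$ and $(\lambda_i^B)$ shows each inner term is bounded above by $\sum_i \lambda_i^A \lambda_i^B$, which is attained at $\sigma = \mathrm{id}$. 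Taking the convex combination preserves the bound, giving $\tr(AB) \le \sum_i \lambda_i^A \lambda_i^B$; combined with the expansion above this yields the lemma.

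The only mildly delicate point is the sign/conjugation bookkeeping in the complex Hermitian case (ensuring $\tr(AB)$ really is real in the above expression), but once $A$ and $B$ are both diagonalized by unitaries the quantity $\sum_{i,j} \lambda_i^A \lambda_j^B |w_{ij}|^2$ is manifestly real, so this causes no trouble. The core of the argument is genuinely the Birkhoff / rearrangement step; everything else is bookkeeping.
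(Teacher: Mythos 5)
The paper does not actually prove this lemma: it is stated as a classical recalled fact (Hoffman--Wielandt is a standard textbook result) and used without proof. Your blind attempt therefore cannot match ``the paper's own proof,'' since there isn't one. That said, your argument is the canonical proof and it is correct: reduce to showing $\tr(AB)\le\sum_i\lambda_i^A\lambda_i^B$ for sorted spectra, write $\tr(AB)=\sum_{i,j}\lambda_i^A\lambda_j^B|w_{ij}|^2$ with $W=U^*V$ unitary, note $P_{ij}=|w_{ij}|^2$ is doubly stochastic, decompose $P$ as a convex combination of permutation matrices by Birkhoff--von Neumann, and apply the rearrangement inequality term by term. The one blemish is a momentary sign slip early on: you first write the target as $\sum_i\lambda_i^A\lambda_i^B\le\tr(AB)$, which is backwards, before self-correcting to $\tr(AB)\le\sum_i\lambda_i^A\lambda_i^B$; the corrected version is the one you actually prove, and it is the one the lemma needs (since the $-2\tr(AB)$ and $-2\sum_i\lambda_i^A\lambda_i^B$ terms appear with minus signs in the two expansions). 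Everything else, including the remark that $\tr(AB)$ is automatically real for Hermitian $A,B$, is sound.
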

\medskip

To describe the limiting Stieltjes transform, write, for $z\in\C^+$,
\[
G_N(z):=(zI_N-X_N)^{-1}
\]
for the resolvent matrix; its normalized trace is $S_{X_N}(z)$.
The starting point is the following lemma.

\begin{lemma}[Lemma 2.4.6 in 
\cite{AGZ10}]\label{rec}
Let $A\in\mathcal H_N(\C)$.  For each $i$, let $v_i$ be the $i$th
column of $A$ with its $i$th entry removed, so that
$v_i\in\C^{N-1}$, and let $A^{(i)}\in M_{N-1}(\C)$ be the matrix obtained
by deleting row and column $i$.  Then, for $z\in\C^+$,
\[
(zI_N-A)^{-1}(i,i)
=\frac{1}{z-A(i,i)-v_i^*(zI_{N-1}-A^{(i)})^{-1}v_i}.
\]
\end{lemma}

Let $X_N^{(i)}$ be obtained from $X_N$ by deleting row and column $i$, and
let $x_i$ be the corresponding column with its $i$th entry removed.
Lemma~\ref{rec} gives
\begin{align*}
S_{X_N}(z)
&=\frac1N\sum_{i=1}^N
 \frac{1}{z-X_N(i,i)-x_i^*(zI_{N-1}-X_N^{(i)})^{-1}x_i}\\
&=\frac1N\sum_{i=1}^N
 \frac{1}{z-x_i^*(zI_{N-1}-X_N^{(i)})^{-1}x_i}.
\end{align*}
The deletion separates the path into two independent blocks:
\[
X_N^{(i)}=\begin{bmatrix}
    \tilde{X}_{i-1}& 0 \\
    0 & \tilde{X}_{N-i}
\end{bmatrix},
\]
where $\tilde{X}_{i-1}$ and $\tilde{X}_{N-i}$ have the same distribution as $X_{i-1}$ and $X_{N-i}$. Thus
\[
(zI_{N-1}-X_N^{(i)})^{-1}=
\begin{bmatrix}
    \widetilde G_{i-1}(z)&0\\
    0&\widetilde G_{N-i}(z)
\end{bmatrix},
\]
where the two resolvents are independent copies of the corresponding
smaller resolvents.  Consequently,
\begin{equation}\label{cle}
 x_i^*(zI_{N-1}-X_N^{(i)})^{-1}x_i
 =b_{i-1}^2\widetilde G_{i-1}(z)(i-1,i-1)
  +b_i^2\widetilde G_{N-i}(z)(1,1).
\end{equation}

We first identify the endpoint resolvent entries.

\begin{proposition}
For every $z\in\C^+$ and $i\ge1$, the variables $G_i(z)(i,i)$ and
$G_i(z)(1,1)$ have the same distribution.
\end{proposition}

\begin{proof}
Fix $z\in\C^+$ and set $P_i(z):=\det(zI_i-X_i)$.  Expansion along the
first row gives
\begin{equation}\label{eq:P}
P_{i+1}(z) = zP_i(z) -  b_i^2 P_{i-1}(z).
\end{equation}
The adjugate formula gives
\[
(zI_i-X_{i})^{-1}(1,1)=\frac{\det(zI_{i-1}-X_{i-1})}{\det(zI_i-X_{i})}=\frac{P_{i-1}(z)}{P_{i}(z)}.
\]
Thus, with $s_i:=G_i(z)(1,1)$, Equation~\eqref{eq:P} yields
\[
s_{i+1} = \frac{1}{\,z - b_i^2 s_i\,}, \qquad s_1 = \frac{1}{z}.
\]

The reversal permutation takes $X_i$ to a matrix with the coefficients in
reverse order.  Since the coefficients are i.i.d., the reversed matrix has
the same law as $X_i$.  Its upper-left resolvent entry is the lower-right
resolvent entry of $X_i$, which proves the assertion.
\end{proof}

The endpoint symmetry and the Schur-complement identity give, after taking
expectations and with the boundary convention $b_0=b_N=0$,
\begin{equation}\label{Stil}
\E S_{X_N}(z)
=\frac1N\sum_{i=1}^N
\mathbb E\!\left[
 \frac{1}{z-b_{i-1}^2s_{i-1}-b_i^2\widetilde s_{N-i}}
\right],
\end{equation}
where, in each summand, $s_{i-1}$ and $\widetilde s_{N-i}$ are
independent endpoint fractions and are independent of the two adjacent
coefficients.  This is a termwise identity of expectations; it is not an
equality in distribution for the entire sum.  We set $s_0=0$ as the
zero-depth boundary value corresponding to the absence of an edge outside
the matrix.
\medskip

\begin{lemma}\label{Wass}
Let $(\beta_i)_{i\ge1}$ be i.i.d.\ copies of $b_1$, set $s_0=0$, and
define
\[
 s_i=\frac{1}{z-\beta_i^2s_{i-1}},\qquad i\ge1.
\]
If $\E b_1^2<\infty$ and $\Im z>\sqrt{\E b_1^2}$, then $s_i$ converges
in distribution to a random variable $S(z)$ satisfying
\[
 S(z)\stackrel d=\frac{1}{z-b^2S(z)},
\]
where $b$ is a copy of $b_1$ independent of $S(z)$.  Thus the law of
$S(z)$ is the limit of the laws of the finite continued fractions
\[
S(z)=\cfrac{1}{z-\cfrac{\; b_1^2\; }{\; z-\cfrac{\; b_2^2\; }{\dots}\; }\; \; }\,.
\]
\end{lemma}

\begin{proof}
Let $\Im z>\sqrt{\E b_1^2}$.  Then $s_i\in\C^-$ and
$|s_i|\le1/\Im z$.  Denote the laws of $s_n$ and $b_1$ by
$\mathbb P_n$ and $\mathbb P_b$, respectively, and let
$f$ be $1$-Lipschitz on $\C$.  In this proof, $W_1$ denotes the
$1$-Wasserstein distance on $\C\simeq\R^2$ for the Euclidean metric.
\begin{align*}
\mathbb{E}\big(f(s_{n+1})\big) - \mathbb{E}\big(f(s_n)\big)
&=\int_\mathbb{R} \Big[\mathbb{E}\!\left( f\!\left(\frac{1}{z-x^2 s_n}\right)\right) -  \mathbb{E}\!\left(f\!\left(\frac{1}{z-x^2 s_{n-1}}\right)\right)\Big]\, d\mathbb{P}_b(x)\\
& = \int_\mathbb{R} \Big[\mathbb{E}\big(f\circ g_x(s_n)\big) - \mathbb{E}\big(f\circ g_x(s_{n-1})\big)\Big] \,d\mathbb{P}_b(x),
\end{align*}
where $g_x(w)=1/(z-x^2 w)$.  The function $f\circ g_x$ is
$x^2/(\Im z)^2$-Lipschitz, so
\[
\left|\mathbb{E}\big(f\circ g_x(s_n)\big)
- \mathbb{E}\big(f\circ g_x(s_{n-1})\big)\right|
\le \frac{x^2}{(\Im(z))^2} W_1(\mathbb{P}_n,\mathbb{P}_{n-1}).
\]
Integrating yields
\[
W_1(\mathbb{P}_{n+1},\mathbb{P}_{n})\le \frac{\mathbb{E}(b^2)}{(\Im(z))^2}\,W_1(\mathbb{P}_n,\mathbb{P}_{n-1}).
\]
Thus $(\mathbb{P}_n)$ is Cauchy when
$\Im z>\sqrt{\E b_1^2}$ and hence convergent.  To identify its limit,
define, for $\nu\in\mathcal P_1(\overline{\C^-})$,
\[
 \Phi_z(\nu):=
 \mathcal L\!\left(\frac{1}{z-b^2Y}\right),
 \qquad Y\sim\nu,
\]
where $b$ and $Y$ are independent.  The recursion gives
$\mathbb P_{n+1}=\Phi_z(\mathbb P_n)$, and the estimate above shows that
$\Phi_z$ is a strict contraction in $W_1$.  If
$\mathbb P_n\to\mathbb P_\infty$, continuity of this contraction gives
$\mathbb P_\infty=\Phi_z(\mathbb P_\infty)$, which is precisely the
displayed distributional fixed-point equation.  The same contraction shows
that this invariant law is unique.
\end{proof}

We also recall the discrete Burkholder inequality.

\begin{lemma}[Burkholder inequality; see \cite{Burkholder1973}]
Let $Y\in L^p(\Omega,\mathcal A,\mathbb P)$, $p>2$, and let
$(\mathcal F_k)$ be a filtration.  Set
$Y_k:=\E(Y\mid\mathcal F_k)-\E(Y\mid\mathcal F_{k-1})$.  Then
\[
 \E\left|\sum_{k=1}^nY_k\right|^p
 \le C_p\E\left(\sum_{k=1}^n|Y_k|^2\right)^{p/2},
\]
where $C_p$ depends only on $p$.
\end{lemma}

We add a classical bound between resolvents of minors.

\begin{proposition}[Lemma~1.5.6 in \cite{C12}]
Let $M$ be an $n\times n$ Hermitian matrix and let $M_k$ be obtained by
deleting row and column $k$.  Then, for $z\in\C^+$,
\[
 \left|\mathrm{Tr}((zI_n-M)^{-1})
 -\mathrm{Tr}((zI_{n-1}-M_k)^{-1})\right|
 \le\frac{1}{\Im z}.
\]
\end{proposition}
This is the deletion estimate used in the concentration argument below.
Indeed, if one diagonal entry is resampled, deletion of its vertex gives a
common minor for the two matrices; if one off-diagonal entry is resampled,
deletion of its two incident vertices gives a common minor.  Applying the
proposition once or twice to each matrix and dividing by $N$ shows that the
corresponding normalized resolvent traces differ by at most
$C/(N\Im z)$.
\medskip

We shall also use the rank inequality: for Hermitian $A,B\in M_N(\C)$,
\begin{equation}\label{eq:rank}
 \sup_x|F_A(x)-F_B(x)|\le\frac{\operatorname{rank}(A-B)}N,
 \qquad
 |S_A(z)-S_B(z)|\le
 \frac{2\operatorname{rank}(A-B)}{N\Im z},
\end{equation}
where $F_A,F_B$ are the distribution functions of the two ESDs; see
\cite[Lemma~2.6]{bai1999}.

Combined with the common-minor estimate above, Burkholder's inequality gives
the following concentration statement.

\begin{corollary}
Let $X_N$ be a tridiagonal model whose entries are independent.  For every
$z\in\C^+$ and $p>2$,
\[
 \E\left|S_{X_N}(z)-\E S_{X_N}(z)\right|^p
 \le C_{p,z}N^{-p/2}.
\]
Consequently $S_{X_N}(z)-\E S_{X_N}(z)\to0$ almost surely.
\end{corollary}

Indeed, revealing one entry at a time and using \eqref{eq:rank} bounds
each martingale difference by $C_z/N$; equivalently, this follows from the
common-minor comparison immediately above.  Burkholder's inequality and
Borel--Cantelli give the claim.  We can therefore first identify the limit
of the expected Stieltjes transform.
\medskip 

The characterization of our principal result is naturally connected with several
classical descriptions of limiting spectral distributions in random matrix theory.
For Wigner and Wishart matrices, the limiting laws are the semicircle law
\cite{Wigner1958,AGZ10,C12} and the Mar\v{c}enko--Pastur law
\cite{MP67,AGZ10,C12}, respectively; in both cases the Stieltjes transform is
characterized by a self-consistent algebraic equation, obtained from resolvent
identities or equivalent orthogonal-polynomial/continued-fraction arguments.
For special tridiagonal and Jacobi matrix models, this point of view is especially
transparent: the three-term recurrence for orthogonal polynomials yields explicit
second-order equations for the corresponding continued fractions, recovering the
semicircle, Mar\v{c}enko--Pastur, and Kesten--McKay laws
\cite{Kesten1959,McKay1981,Dumitriu_2006,DubbsEdelman2015}.  The same
framework also includes Jacobi or MANOVA ensembles: the beta-Jacobi tridiagonal
models of Killip--Nenciu and Edelman--Sutton \cite{KN04,ES08} are connected
to MANOVA statistics, whose limiting spectral distribution is the Wachter law
\cite{Wachter1980,Johnstone2008,EF13}.  Theorem~\ref{T2} fits into this literature
by giving, for a broad i.i.d. tridiagonal model under only a second-moment
assumption, a limiting measure characterized by a random continued-fraction
fixed point for its Stieltjes transform, rather than by a deterministic
finite-degree algebraic equation.

\begin{theorem}\label{T2}
Let $(X_N)$ be the simple tridiagonal model defined above. If $(b_i)$ is a
sequence of i.i.d.\ variables with a finite second moment, then the
empirical spectral distribution of $X_N$ converges almost surely to
$\mu_b$.  For $\Im z>\sqrt{\E b_1^2}$, its Stieltjes transform is
characterized by
\[
S_{\mu_b}(z) = \mathbb{E}\!\left(\frac{1}{\,z- b_1^2 S_1(z)- b_2^2 S_2(z)\,}\right),
\]
where $S_1(z),S_2(z)$ are i.i.d.\ copies of the fixed-point variable
in Lemma~\ref{Wass}, $b_1,b_2$ are i.i.d.\ copies of an off-diagonal
entry, and all four variables are independent.
On the remainder of $\C^+$, $S_{\mu_b}$ is the unique analytic
continuation of this expression.
\end{theorem}

\begin{proof}
Fix $z$ with $\Im z>\sqrt{\E b_1^2}$.  In \eqref{Stil}, all but
$2N_0$ summands have both endpoint fractions of depth at least $N_0$.
Lemma~\ref{Wass} and the resolvent Lipschitz estimate used in its proof
therefore imply
\[
 \E S_{X_N}(z)\longrightarrow
 \E\!\left[\frac{1}{z-b_1^2S_1(z)-b_2^2S_2(z)}\right].
\]

The expected ESDs are tight because
\[
 \int x^2\,d\E\mu_{X_N}(x)
 =\frac1N\E\mathrm{Tr}(X_N^2)
 =\frac{2(N-1)}N\E b_1^2.
\]
Their Stieltjes transforms form a normal family on $\C^+$.  Every
subsequential analytic limit agrees with the preceding expression on the
open set $\{\Im z>\sqrt{\E b_1^2}\}$; Montel's theorem and the identity
theorem therefore give convergence on all of $\C^+$ to the Stieltjes
transform of a probability measure $\mu_b$.

The martingale concentration estimate above gives almost-sure convergence
of Stieltjes transforms on a countable dense subset of $\C^+$.  Finally,
$N^{-1}\mathrm{Tr}(X_N^2)\to2\E b_1^2$ almost surely by the strong law, so
the random ESDs are tight.  The Stieltjes criterion completes the proof.
\end{proof}

\begin{proposition}[Second moment of the simple limit]
\label{prop:simple-W2}
Under the hypotheses of Theorem~\ref{T2}, the limiting law belongs to
$\mathcal P_2(\mathbb R)$ and
\[
 m_2(\mu_b)=2\E b_1^2.
\]
In fact,
\[
 W_2(\mu_{X_N},\mu_b)\longrightarrow0
 \qquad\text{almost surely}.
\]
\end{proposition}

\begin{proof}
For $C>0$, let $b_k^{(C)}:=(-C)\vee(b_k\wedge C)$ and let $X_N^{(C)}$
be the simple model formed with these clipped entries.  Denote its limiting
law by $\mu_{b,C}$.  Theorem~\ref{T2} applies to $X_N^{(C)}$, and all its
eigenvalues lie in $[-2C,2C]$.  Hence weak convergence and the strong law
give
\[
 m_2(\mu_{b,C})
 =\lim_{N\to\infty}\frac1N\operatorname{Tr}((X_N^{(C)})^2)
 =2\E[(b_1^{(C)})^2].
\]

Couple $X_N$ and $X_N^{(C)}$ by using the same entries.  The
Hoffman--Wielandt inequality yields
\[
 W_2^2(\mu_{X_N},\mu_{X_N^{(C)}})
 \le\frac1N\operatorname{Tr}\bigl((X_N-X_N^{(C)})^2\bigr)
 =\frac2N\sum_{k=1}^{N-1}|b_k-b_k^{(C)}|^2.
\]
For integer $C$, all the relevant weak convergences and strong laws hold on
one common probability-one event.  Lower semicontinuity of $W_2$ under weak
convergence therefore gives
\[
 W_2^2(\mu_b,\mu_{b,C})
 \le2\E|b_1-b_1^{(C)}|^2\longrightarrow0.
\]
Consequently $\mu_b\in\mathcal P_2(\mathbb R)$ and, by continuity of second
moments under $W_2$ convergence,
\[
 m_2(\mu_b)=\lim_{C\to\infty}m_2(\mu_{b,C})=2\E b_1^2.
\]
Finally,
$N^{-1}\operatorname{Tr}(X_N^2)
=2N^{-1}\sum_{k<N}b_k^2\to2\E b_1^2$ almost surely.  Combining this
with Theorem~\ref{T2} and the characterization of $W_2$ convergence proves
the last assertion.
\end{proof}

Theorem~\ref{T2} also yields a perturbative extension to independent,
non-identically distributed off-diagonal entries whose laws converge in
$W_2$.  This situation occurs, for example, in normalized tridiagonal
$\beta$-ensemble models.  We first record the coupling fact that is needed.

\begin{lemma}\label{e:W_2err}
Let $(b_n)_{n\geq 1}$ be independent $\mathbb{R}^d$-valued random
variables, and let
\[
\mu_n:=\mathcal{L}(b_n), \qquad \mu:=\mathcal{L}(b),
\]
with $\mu_n,\mu\in\mathcal{P}_2(\mathbb{R}^d)$. Then the following are
equivalent:
\begin{enumerate}
    \item
    \[
    W_2(\mu_n,\mu)\longrightarrow 0.
    \]
    \item On some probability space, there exist random variables
    $(\widetilde b_n,\beta_n,\varepsilon_n)_{n\geq 1}$ such that
    \[
    (\widetilde b_n)_{n\geq 1}
    \stackrel{d}{=}
    (b_n)_{n\geq 1},
    \qquad
    (\beta_n)_{n\geq 1}\ \text{are i.i.d.\ with law }\mu,
    \]
    and
    \[
    \widetilde b_n=\beta_n+\varepsilon_n,
    \qquad
    \mathbb{E}|\varepsilon_n|^2\longrightarrow 0.
    \]
\end{enumerate}
Moreover, in the implication $(1)\Rightarrow(2)$, one may arrange that
\[
\mathbb{E}|\varepsilon_n|^2=W_2^2(\mu_n,\mu)
\qquad\text{for every }n.
\]
\end{lemma}

\begin{proof}
Assume $(1)$. For each $n$, let
$\pi_n\in\Pi(\mu,\mu_n)$ be an optimal coupling, and take the product
probability space with law $\bigotimes_{n\geq 1}\pi_n$. If
$(\beta_n,\widetilde b_n)$ denotes the $n$-th coordinate pair, then
the pairs are independent,
\[
\mathcal{L}(\beta_n)=\mu,
\qquad
\mathcal{L}(\widetilde b_n)=\mu_n,
\]
and therefore
\[
(\widetilde b_n)_{n\geq 1}
\stackrel{d}{=}
(b_n)_{n\geq 1}.
\]
Setting $\varepsilon_n:=\widetilde b_n-\beta_n$, we obtain
\[
\mathbb{E}|\varepsilon_n|^2
=
W_2^2(\mu_n,\mu)
\longrightarrow 0.
\]

Conversely, under $(2)$, the pair $(\widetilde b_n,\beta_n)$ is a
coupling of $\mu_n$ and $\mu$, so
\[
W_2^2(\mu_n,\mu)
\leq
\mathbb{E}|\widetilde b_n-\beta_n|^2
=
\mathbb{E}|\varepsilon_n|^2
\longrightarrow 0.\qedhere
\]
\end{proof}

Finally, we have this concluding result. 

\begin{corollary}
Let $X_N$ be the zero-diagonal simple model with independent, not
necessarily identically distributed, entries $b_n$.  If
$W_2(\mathcal L(b_n),\mathcal L(b))\to0$, then
\[
 W_2(\mu_{X_N},\mu_b)\longrightarrow0
 \qquad\text{in probability},
\]
where $\mu_b$ is the law in Theorem~\ref{T2}.
\end{corollary}

\begin{proof}
On the coupling supplied by Lemma~\ref{e:W_2err}, write
$b_n=\beta_n+\varepsilon_n$, where the $\beta_n$ are i.i.d.\ in $L^2$
and $\E|\varepsilon_n|^2\to0$.  If $Y_N$ is the simple model with
entries $\beta_n$, then
\[
\mathbb{E}\|X_N-Y_N\|_{\mathrm{HS}}^2
=2\sum_{i=1}^{N-1}\mathbb E|\varepsilon_i|^2=o(N).
\]
Hoffman--Wielandt therefore gives
$W_2(\mu_{X_N},\mu_{Y_N})\to0$ in probability.  Apply
Proposition~\ref{prop:simple-W2} to $Y_N$ and use the triangle inequality.
\end{proof}

\begin{figure}[ht!]
\centering
\includegraphics[width=8cm]{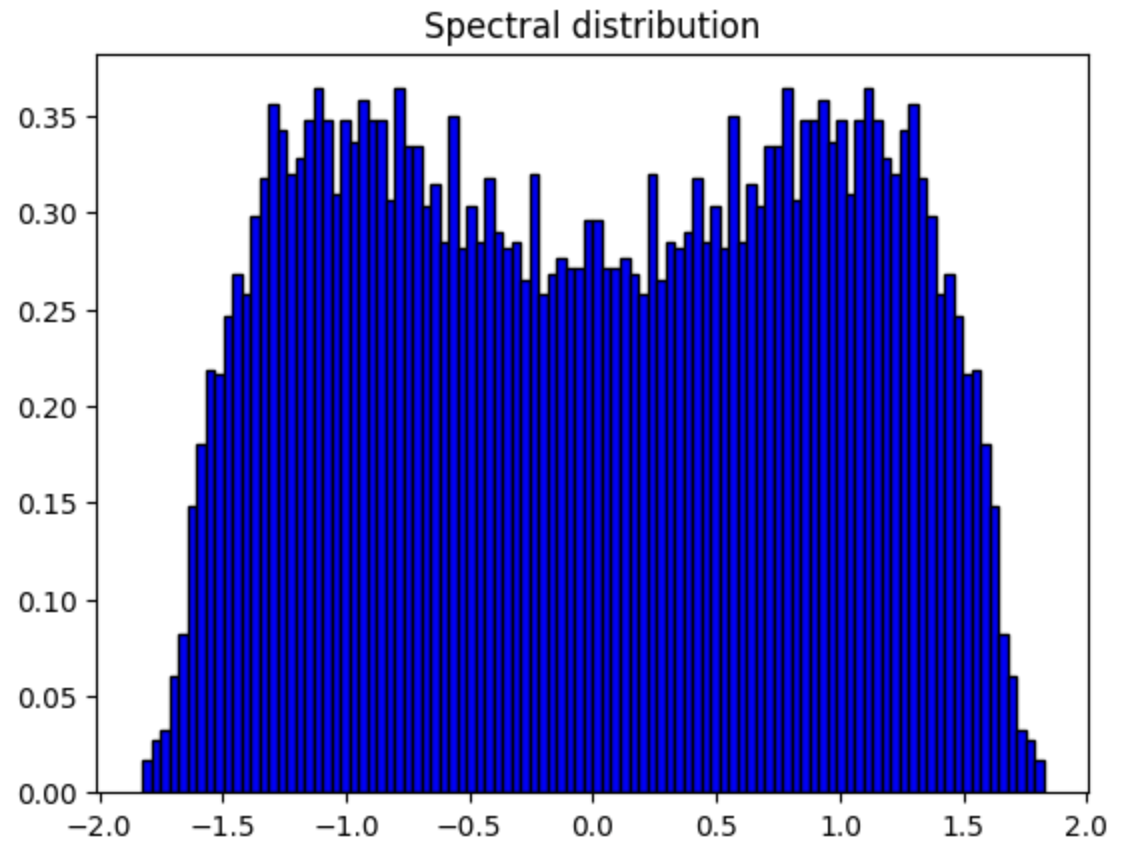}
\caption{Normalized eigenvalue histogram from one zero-diagonal simple
model of size $N=10^4$ with i.i.d. Pareto$(1,4)$ off-diagonal entries.  The
coefficient law has a finite second moment, as required in
Theorem~\ref{T2}, but is heavy tailed.}
\label{fig:pareto-simple}
\end{figure}

Figure~\ref{fig:pareto-simple} illustrates two features of the simple
model.  Although the coefficients are positive and strongly non-Gaussian,
the spectral distribution is symmetric: conjugation by the diagonal sign
matrix $\operatorname{diag}(1,-1,1,-1,\ldots)$ sends a zero-diagonal
tridiagonal matrix to its negative.  At the same time, the visible shape is
not the semicircle law.  It reflects the one-dimensional local recursion
in Theorem~\ref{T2} and retains information about the coefficient law.
This is the reference distribution that the deterministic profile will
rescale in the next section.

\section{The general tridiagonal model}\label{s:3}

We now prove the scale-mixture principle.  We first treat bounded
coefficients by the path method and then remove boundedness by a rank
truncation argument.

\subsection{Compact case}
We begin with the local consequence of \eqref{H3} which will be used in
every path expansion below.

\begin{lemma}[Finite-window convergence]\label{lem:local-block}
For every fixed $r\ge0$,
\[
 \frac1{N-2r}\sum_{p=r+1}^{N-r}
 \delta_{(\sigma_{p-r,N},\ldots,\sigma_{p+r,N})}
 \Rightarrow \mathcal L(T,\ldots,T).
\]
\end{lemma}

\begin{proof}
Let $I_{N,r}$ be uniformly distributed on
$\{r+1,\ldots,N-r\}$.  By \eqref{H3} and the Portmanteau theorem, for
every $\varepsilon>0$,
\[
 \frac1{N-1}\#\{k<N:
 |\sigma_{k+1,N}-\sigma_{k,N}|>\varepsilon\}\longrightarrow0.
\]
Translating the index by any fixed $j$ changes at most $O_r(1)$ terms.
Consequently, a union bound over the $2r$ edges in the window, followed by
the triangle inequality, gives
\[
 \max_{|j|\le r}
 |\sigma_{I_{N,r}+j,N}-\sigma_{I_{N,r},N}|
 \longrightarrow0
 \qquad\text{in probability}.
\]
The first marginal of \eqref{H3} also gives
$\sigma_{I_{N,r},N}\Rightarrow T$; deleting the $O_r(1)$ boundary
indices does not change this conclusion.  Slutsky's theorem now yields
\[
 (\sigma_{I_{N,r}-r,N},\ldots,\sigma_{I_{N,r}+r,N})
 \Rightarrow(T,\ldots,T),
\]
which is the claimed empirical convergence.
\end{proof}

For the proof of the compact theorem we use the following path notation.
For $q\ge0$, let
\[
 \mathcal C_q:=\{\gamma=(j_0,\ldots,j_q):
 j_0=j_q=0,\ |j_s-j_{s-1}|=1\text{ for }1\le s\le q\},
\]
and, for $i\in\mathbb Z$, let $\ell_i(\gamma)$ denote the number of
crossings of the edge $\{i,i+1\}$ by $\gamma$.  Finally, put
\[
 I_N(\gamma):=\{p:1\le p+j_s\le N\text{ for }0\le s\le q\}.
\]
Only the indices $|i|\le q$ can occur in $\ell_i(\gamma)$, and
$|I_N(\gamma)|=N+O_q(1)$.

\begin{theorem}[Scale-mixture principle: bounded case]
\label{thm:scale-mixture}
Let $(b_k)_{k\ge1}$ be i.i.d.\ real random variables with bounded
support, let $X_N$ be the associated zero-diagonal simple model, and let
$X_N^\sigma$ be its zero-diagonal deformation, whose $k$th off-diagonal
entry is $\sigma_{k,N}b_k$.  Suppose that the deterministic array $\sigma$
is uniformly bounded and satisfies \eqref{H3}.  Then
\[
 \mu_{X_N^\sigma}\Rightarrow\mathcal L(TX)
\]
almost surely, and the expected ESD converges weakly to the same law, where
$T\sim\mu_T$, $X\sim\mu_b$, and $T,X$ are independent.  Equivalently, for
every integer $q\ge0$,
\[
 m_q(\mathcal L(TX))=m_q(\mu_T)m_q(\mu_b).
\]
The odd moments vanish because $\mu_b$ is symmetric.
\end{theorem}

\begin{proof}
Reversing the order of the basis does not change normalized traces.  After
this reversal, the coefficient attached to the edge $\{k,k+1\}$ is
$\sigma_{k,N}b_k$.  Expanding the diagonal entries of the $q$th power gives
the exact identity
\begin{equation}\label{eq:path-trace-scale}
 \trN((X_N^\sigma)^q)
 =\frac1N\sum_{\gamma\in\mathcal C_q}
   \sum_{p\in I_N(\gamma)}
   \prod_{i\in\mathbb Z}
   (\sigma_{p+i,N}b_{p+i})^{\ell_i(\gamma)}.
\end{equation}
This is the specialization of the path monomial and trace decompositions in
\cite[Equations~(2.6) and (2.12)]{P09} to zero-diagonal matrices.

We first compute the expectation.  For $\gamma\in\mathcal C_q$, set
\[
 c_\gamma:=\prod_{i\in\mathbb Z}
       \E[b_1^{\ell_i(\gamma)}],
 \qquad
 F_\gamma((x_i)_{|i|\le q})
 :=\prod_{i\in\mathbb Z}x_i^{\ell_i(\gamma)}.
\]
The products are finite.  Independence of the $b_k$ in
\eqref{eq:path-trace-scale} gives
\begin{equation}\label{eq:expected-path-scale}
 \E\trN((X_N^\sigma)^q)
 =\sum_{\gamma\in\mathcal C_q}c_\gamma
   \frac1N\sum_{p\in I_N(\gamma)}
   F_\gamma((\sigma_{p+i,N})_{|i|\le q}).
\end{equation}
The omission of the $O_q(1)$ boundary bases has no effect on the limit;
this is the analogue of the boundary reduction in
\cite[Equation~(2.13)]{P09}.  The key point is that
\[
 F_\gamma(t,\ldots,t)=t^{\sum_i\ell_i(\gamma)}=t^q.
\]
Since $F_\gamma$ is bounded on the common range of the profiles,
Lemma~\ref{lem:local-block} yields
\[
 \frac1N\sum_{p\in I_N(\gamma)}
 F_\gamma((\sigma_{p+i,N})_{|i|\le q})
 \longrightarrow \E T^q.
\]
In the terminology of \cite{P09}, this is the counterpart of the decisive
local computation in Equation~(2.15).  There are no flat paths to remove
here, because the diagonal of the matrix is zero.

If $\sigma\equiv1$, the same path expansion gives
$m_q(\mu_b)=\sum_{\gamma\in\mathcal C_q}c_\gamma$.  Thus
\eqref{eq:expected-path-scale} implies
\[
 \E\trN((X_N^\sigma)^q)
 \longrightarrow \E T^q\,m_q(\mu_b)
 =m_q(\mathcal L(TX)).
\]
For odd $q$ the set $\mathcal C_q$ is empty, which also explains directly
why the odd moments vanish.

We next pass from expectation to almost-sure convergence.  Group in
\eqref{eq:path-trace-scale} all paths based at the same $p$, and denote the
centered contribution by $Z_{p,N}$.  The variables $Z_{p,N}$ are uniformly
bounded, and $Z_{p,N}$ depends only on
$b_{p-q},\ldots,b_{p+q}$.  Hence $Z_{p,N}$ and $Z_{s,N}$ are independent
whenever $|p-s|>2q$.  In the expansion of
$\E|\sum_p Z_{p,N}|^4$, a term vanishes if one of its four indices lies at
distance greater than $2q$ from the other three.  Only $O_q(N^2)$ index
quadruples remain, and consequently
\[
 \E\left|\trN((X_N^\sigma)^q)
       -\E\trN((X_N^\sigma)^q)\right|^4
 \le \frac{C_q}{N^2}.
\]
Borel--Cantelli gives almost-sure convergence of every moment.  This is the
finite-range version of the argument following Equation~(2.17) in
\cite{P09}.

Finally,
$\|X_N^\sigma\|_{\mathrm{op}}\le
2\sup_{k,N}|\sigma_{k,N}|\,\|b_1\|_\infty$ almost surely, and the same
bound holds for the limiting operator.  The measures therefore have a
common compact support.  Moment convergence determines the limiting measure
and proves both asserted modes of weak convergence.
\end{proof}

Theorem~\ref{thm:scale-mixture} is a transfer principle for
Theorem~\ref{T2}.  The latter contains the microscopic random information in
$\mu_b$, whereas the present theorem shows that a deterministic profile which
is locally constant at a uniformly chosen edge contributes only an
independent scalar factor $T$.  The marginal consequence of \eqref{H3}
records the macroscopic proportions of the profile values, while its joint
part says that microscopic interfaces occupy a negligible fraction of the
matrix.  The stronger vanishing-mesh condition \eqref{H3-infty} is convenient but not
intrinsic.  In terms of the proof of \cite{P09}, the first use of the
deformation assumptions is precisely the analogue of Equation~(2.15): a
fixed path sees only a bounded window, \eqref{H3} collapses that window to
one value, and its marginal then averages that value over the base point.

We finish the compact case by showing that every probability law can occur
as the macroscopic profile law $\mu_T$, even under the stronger uniform
vanishing-mesh condition.

\begin{proposition}\label{prop:quantile}
Let $T$ be a real random variable.  There exists a deterministic triangular
array $(\sigma_{k,N})_{1\le k\le N}$ such that
\[
 \max_{1\le k<N}|\sigma_{k+1,N}-\sigma_{k,N}|\longrightarrow0
\]
and $\nu_N=N^{-1}\sum_{k=1}^N\delta_{\sigma_{k,N}}\Rightarrow\mathcal L(T)$.
In particular, the array satisfies \eqref{H3}.  If $T\in L^2$, the same
array may be chosen so that
\[
 W_2(\nu_N,\mathcal L(T))\longrightarrow0,
 \qquad
 W_2(\overline\nu_N,\mathcal L(T))\longrightarrow0.
\]
\end{proposition}

\begin{proof}
Let $Q$ be the quantile function of $T$, and put
\[
 R_N:=N^{1/8},\qquad h_N:=N^{-1/8},\qquad
 K_N:=\left[\frac{2R_N}{h_N}\right]+2,
 \qquad m_N:=N-K_N.
\]
Here $[u]$ denotes the integer part of $u$, namely the largest integer not
exceeding $u$.

For the finitely many $N$ with $m_N\le0$, define the row arbitrarily.  For
all other $N$, let $c_N(x):=(-R_N)\vee(x\wedge R_N)$.

Take the ordered core points
\[
 x_{j,N}:=c_N\!\left(Q\!\left(\frac{j-\frac12}{m_N}\right)\right),
 \qquad 1\le j\le m_N.
\]
Between every two distinct consecutive core values, insert equally spaced
bridge points so that every gap is at most $h_N$.  Their number is at most
\[
 \left\lceil\frac{x_{m_N,N}-x_{1,N}}{h_N}\right\rceil
 \le\left\lceil\frac{2R_N}{h_N}\right\rceil<K_N.
\]
Fill unused slots with copies of $x_{1,N}$ and place them next to that
point.  The resulting row has exactly $N$ entries and satisfies
\[
 \max_{k<N}|\sigma_{k+1,N}-\sigma_{k,N}|
 \le h_N\longrightarrow0.
\]

Let $\alpha_N=m_N^{-1}\sum_{j=1}^{m_N}\delta_{x_{j,N}}$ and let
$\mu^{(R_N)}=\mathcal L(c_N(T))$.  Monotonicity of $Q$ and a direct
quantile coupling give
\[
 W_1(\alpha_N,\mu^{(R_N)})\le\frac{2R_N}{m_N},
 \qquad
 W_2^2(\alpha_N,\mu^{(R_N)})\le\frac{4R_N^2}{m_N}.
\]
Coupling the inserted points with arbitrary core points gives
\[
 W_1(\nu_N,\alpha_N)\le\frac{2R_NK_N}{N},
 \qquad
 W_2^2(\nu_N,\alpha_N)\le\frac{4R_N^2K_N}{N}.
\]
All four errors tend to zero for the chosen powers.  Since
$\mu^{(R_N)}\Rightarrow\mathcal L(T)$, weak convergence follows.  If
$T\in L^2$, then $W_2(\mu^{(R_N)},\mathcal L(T))\to0$ by dominated
truncation, and the displayed $W_2$ bounds prove the first stronger
conclusion.  Removing one point of a measure supported on $[-R_N,R_N]$
changes the squared $W_2$ distance by at most $4R_N^2/N=o(1)$, so the same
conclusion holds for $\overline\nu_N$.  Finally, the vanishing mesh and the
weak empirical convergence imply \eqref{H3} by
Remark~\ref{rem:strong-sigma}.
\end{proof}

\begin{remark}
Continuity of the distribution function alone does not make the raw quantile
mesh vanish: separated components of the support create fixed jumps, and
heavy tails can create growing extreme gaps.  The bridge construction deals
with both phenomena.  If only the averaged condition \eqref{H3}, rather than
\eqref{H3-infty}, is required, ordered empirical quantiles already
suffice.
\end{remark}

\subsection{The general tridiagonal model}

As in the compact case above, the diagonal does not affect the limiting distribution.

\begin{lemma}\label{deform}
Let $X_N^\sigma$ satisfy (H2), and let $\widetilde X_N^\sigma$ be
obtained by setting its diagonal to zero.  For every $z\in\C^+$,
\[
 \E|S_{X_N^\sigma}(z)-S_{\widetilde X_N^\sigma}(z)|\longrightarrow0.
\]
Consequently the two models have the same almost-sure weak limit whenever
one of the expected Stieltjes transforms has a limit.
\end{lemma}

\begin{proof}
The resolvent identity, Cauchy--Schwarz, and Lemma~\ref{HW} give
\[
 |S_A(z)-S_B(z)|
 \le\frac{\|A-B\|_{\mathrm{HS}}}{\sqrt N(\Im z)^2}.
\]
Since
\[
 \E\|X_N^\sigma-\widetilde X_N^\sigma\|_{\mathrm{HS}}^2
 =\sum_{i=1}^N\E|a_{i,N}|^2=o(N),
\]
the first assertion follows.  The martingale concentration estimate applies
to both models, and hence transfers the deterministic expected limit to an
almost-sure limit on a countable Stieltjes set.
\end{proof}

Hence we may (and will) study $X_N^\sigma$ directly in the case where $(a_{i,N})_{i,N}$ is the zero family.
\medskip

We also note how the parameter $\alpha$ appears in the limiting distribution
as a deformation in Theorem~\ref{T1}.  The case $\sigma\equiv1$ contains the
microscopic information needed to describe the limiting spectral distribution
of $X_N^\sigma$; the remaining information is the local empirical behavior of
$\sigma$.
\medskip

We now remove the boundedness assumptions from
Theorem~\ref{thm:scale-mixture}.
\medskip

\begin{theorem}\label{T3}
Let $X_N^\sigma$ be the model \eqref{model}.  Under
Assumptions~\ref{assumptionmatrix} and condition~\eqref{H3},
\[
 \mu_{X_N^\sigma}\Rightarrow\mathcal L(TX)
 \qquad\text{almost surely},
\]
where $T\sim\mu_T$, $X\sim\mu_b$, and $T,X$ are independent.  No
moment assumption on $T$ is required.
If, in addition, $T\in L^2$ and \eqref{H3-2} holds, then
\[
 W_2\bigl(\mu_{X_N^\sigma},\mathcal L(TX)\bigr)
 \longrightarrow0
 \qquad\text{in probability}.
\]
\end{theorem}

\begin{proof}
By Lemma~\ref{deform}, it is enough to treat the zero-diagonal model; all
matrices below in this proof have zero diagonal.
For $R>0$, let $c_R(x):=(-R)\vee(x\wedge R)$ and set
\[
 b_i^{(C)}:=c_C(b_i),\qquad
 \sigma_{i,N}^{(M)}:=c_M(\sigma_{i,N}).
\]
The clipped deformation still satisfies \eqref{H3}, with limit
$c_M(T)$, because $c_M$ is continuous and $1$-Lipschitz.  Hence
Theorem~\ref{thm:scale-mixture} gives, for fixed $C,M$,
\[
 \mu_{X_{N,C}^{\sigma^{(M)}}}
 \Rightarrow\mathcal L(c_M(T)X_C)
 \qquad\text{almost surely},
\]
where $X_C$ has the simple-model limit associated with $b_1^{(C)}$.

Changing one off-diagonal edge changes the matrix by rank at most two.
Therefore \eqref{eq:rank} gives
\[
 \sup_x\big|F_{X_N^\sigma}(x)-F_{X_{N,C}^{\sigma^{(M)}}}(x)\big|
 \le\frac2N\#\{i<N:|b_i|>C\}
 +\frac2N\#\{i<N:|\sigma_{i,N}|>M\}.
\]
For every continuity point $M$ of the tail of $\mu_T$, the strong law and
the first marginal of \eqref{H3} yield
\[
 \limsup_{N\to\infty}\sup_x|F_{X_N^\sigma}(x)
       -F_{X_{N,C}^{\sigma^{(M)}}}(x)|
 \le2\mathbb P(|b_1|>C)+2\mu_T(|t|>M)
 \qquad\text{almost surely}.
\]
Choose countable sequences $C_j,M_j\to\infty$ such that every $M_j$ is a
continuity point of $M\mapsto\mu_T(\{|t|>M\})$.  On the common
probability-one event furnished by the preceding estimates and by the
almost-sure bounded-model convergence for the countably many pairs
$(C_j,M_j)$, the right-hand side tends to zero along these sequences.

The same rank comparison for the simple models gives
$\mu_{X_C}\Rightarrow\mu_b$.  It follows that
$\mathcal L(c_M(T)X_C)\Rightarrow\mathcal L(TX)$, first as $C\to\infty$
and then as $M\to\infty$.  A diagonal argument in the bounded-Lipschitz
metric completes the proof of weak convergence.

For the $W_2$ assertion we return to the original matrix, including its
diagonal, and assume $T\in L^2$ and \eqref{H3-2}.  Wasserstein convergence of
$\overline\nu_N$ implies
\begin{equation}\label{eq:profile-second-moment}
 \frac1N\sum_{k=1}^{N-1}\sigma_{k,N}^2\longrightarrow\E T^2,
 \qquad
 \max_{k<N}\frac{\sigma_{k,N}^2}{N}\longrightarrow0.
\end{equation}
For the second assertion, split the maximum at a fixed level $M$ and use
uniform integrability of the squared profile values on the set
$\{|\sigma_{k,N}|>M\}$.

Put $w_{k,N}:=\sigma_{k,N}^2/N$ and $Y_k:=b_k^2$.  We claim that
\begin{equation}\label{eq:weighted-weak-law}
 \sum_{k=1}^{N-1}w_{k,N}(Y_k-\E Y_1)
 \longrightarrow0
 \qquad\text{in probability}.
\end{equation}
Indeed, for $L>0$ replace $Y_k$ by $Y_k\wedge L$.  Independence and
\eqref{eq:profile-second-moment} give
\[
 \operatorname{Var}\!\left(
  \sum_{k<N}w_{k,N}\bigl(Y_k\wedge L-\E(Y_1\wedge L)\bigr)
 \right)
 \le L^2\Bigl(\max_{k<N}w_{k,N}\Bigr)\sum_{k<N}w_{k,N}
 \longrightarrow0.
\]
The expectation of the absolute contribution of the discarded tails is at
most
\[
 2\Bigl(\sum_{k<N}w_{k,N}\Bigr)
 \E\bigl[Y_1\ind_{\{Y_1>L\}}\bigr],
\]
which tends to zero uniformly in $N$ as $L\to\infty$.  This proves
\eqref{eq:weighted-weak-law}.

The exact second-moment identity is
\begin{equation}\label{eq:matrix-second-moment}
 \int x^2\,d\mu_{X_N^\sigma}(x)
 =\frac1N\sum_{i=1}^Na_{i,N}^2
  +\frac2N\sum_{k=1}^{N-1}\sigma_{k,N}^2b_k^2.
\end{equation}
The first term tends to zero in $L^1$ by (H2), while
\eqref{eq:profile-second-moment}--\eqref{eq:weighted-weak-law} show that
the second tends in probability to
$2\E T^2\E b_1^2$.  On the other hand,
Proposition~\ref{prop:simple-W2} gives
$m_2(\mu_b)=2\E b_1^2$, and therefore
\[
 \E(TX)^2=2\E T^2\E b_1^2.
\]
To pass from these two modes of convergence to the asserted random
$W_2$ convergence, take an arbitrary subsequence.  Convergence of the
second moments in probability provides a further subsequence on which they
converge almost surely.  On the probability-one event where the weak
convergence already holds, the deterministic characterization of $W_2$
convergence then gives $W_2\to0$ along that further subsequence.  This
subsequence criterion proves convergence in probability for the full
sequence.
\end{proof}

\begin{remark}[Why $T\in L^2$ is not sufficient]\label{rem:W2-obstruction}
The additional condition \eqref{H3-2} cannot be omitted.  Take
$a_{i,N}=0$, $b_k\equiv1$, and
\[
 \sigma_{1,N}=\sqrt N,
 \qquad \sigma_{k,N}=1\quad(2\le k\le N).
\]
Then \eqref{H3} holds with $T\equiv1$, and changing one edge does not alter
the weak limiting ESD.  Nevertheless,
\[
 \frac1N\operatorname{Tr}((X_N^\sigma)^2)
 =\frac2N\sum_{k=1}^{N-1}\sigma_{k,N}^2\longrightarrow4,
\]
whereas $m_2(\mu_b)=2$.  Thus weak convergence does not improve to
$W_2$ convergence.  The mode in the second part of Theorem~\ref{T3} is in
probability because (H2) controls the squared Hilbert--Schmidt norm of the
diagonal in $L^1$, but not necessarily almost surely, and because the
weighted law \eqref{eq:weighted-weak-law} is available in probability under
the stated triangular-array hypotheses.
\end{remark}

The coupling lemma used after Theorem~\ref{T2} gives the corresponding
extension to non-identically distributed off-diagonal entries.

\begin{corollary}\label{cor:deformed-noniid}
Let $(b_k)_{k\ge1}$ be independent real random variables such that
\[
 W_2(\mathcal L(b_k),\mathcal L(b))\longrightarrow0
\]
for some $b\in L^2$.  Suppose that (H2), \eqref{H3}, $T\in L^2$, and
\eqref{H3-2} hold.  Then
\[
 W_2\bigl(\mu_{X_N^\sigma},\mathcal L(TX)\bigr)
 \longrightarrow0
 \qquad\text{in probability},
\]
where $X$ has the simple-model law $\mu_b$ associated with $b$, and $T$
and $X$ are independent.
\end{corollary}

\begin{proof}
Take the product coupling furnished by Lemma~\ref{e:W_2err}, independently
of the diagonal family, and write
$b_k=\beta_k+\varepsilon_k$, where the $\beta_k$ are i.i.d. with law
$\mathcal L(b)$ and
\[
 d_k^2:=\E|\varepsilon_k|^2
 =W_2^2(\mathcal L(b_k),\mathcal L(b))\longrightarrow0.
\]
Let $Y_N^\sigma$ be the matrix obtained from $X_N^\sigma$ by replacing
$b_k$ with $\beta_k$ and leaving the diagonal unchanged.  The
Hoffman--Wielandt inequality gives
\[
 \E W_2^2(\mu_{X_N^\sigma},\mu_{Y_N^\sigma})
 \le\frac2N\sum_{k=1}^{N-1}\sigma_{k,N}^2d_k^2.
\]
With $w_{k,N}=\sigma_{k,N}^2/N$, condition \eqref{H3-2} gives
$\sum_kw_{k,N}=O(1)$ and $\max_kw_{k,N}\to0$.  For a fixed $K$,
\[
 \sum_{k<N}w_{k,N}d_k^2
 \le \Bigl(\max_{k<N}w_{k,N}\Bigr)\sum_{k<K}d_k^2
 +\Bigl(\sum_{k<N}w_{k,N}\Bigr)\sup_{k\ge K}d_k^2.
\]
First let $N\to\infty$ and then $K\to\infty$.  The right-hand side tends
to zero, so the two ESDs are asymptotically equal in $W_2$ in probability.
The second part of Theorem~\ref{T3} applies to $Y_N^\sigma$, and the
triangle inequality completes the proof.
\end{proof}

\subsection{Examples}\label{s:5}

We now present a few simple yet somewhat exotic examples that appear to be less commonly discussed in the literature.

\paragraph{Constant coefficients and the Ullman laws}

\begin{proposition}[Power profile and Ullman law]\label{prop:ullman}
Let $\alpha\ge0$ and consider the zero-diagonal model with $b_k\equiv1$
and
\[
 \sigma_{k,N}=\left(\frac{k}{N}\right)^\alpha,
 \qquad 1\le k\le N.
\]
Equivalently, this is the matrix obtained by dividing by $N^\alpha$ a
tridiagonal matrix whose $k$th off-diagonal coefficient is $k^\alpha$.
Then
\[
 \mu_{X_N^\alpha}\Rightarrow\mathcal L(U^\alpha A)
 \qquad\text{almost surely},
\]
where $U$ is uniform on $[0,1]$, $A$ has the arcsine law $\gamma_2$ on
$[-2,2]$, and $U,A$ are independent.  This law is usually called the
Ullman distribution with parameter $\alpha$.
\end{proposition}

\begin{proof}
We first identify the limit of the simple model $b_k\equiv1$.

\smallskip
\noindent\emph{Toeplitz computation.}
The zero-diagonal tridiagonal Toeplitz matrix with off-diagonal entries
equal to $1$ has eigenvalues
\[
 2\cos\left(\frac{j\pi}{N+1}\right),
 \qquad 1\le j\le N.
\]
Consequently its ESD converges to the arcsine law $\gamma_2$ on $[-2,2]$.

\smallskip
\noindent\emph{Stieltjes-transform computation.}
Alternatively, the cavity transform satisfies
\[
 s(z)=\frac{1}{z-s(z)},
 \qquad
 s(z)=\frac{z-\sqrt{z^2-4}}{2},
\]
where the branch is chosen so that $s(z)\sim z^{-1}$ at infinity.
Theorem~\ref{T2} therefore gives
\[
 S_{\gamma_2}(z)=\frac{1}{z-2s(z)}=\frac{1}{\sqrt{z^2-4}}.
\]

Finally, the empirical measure of
$\sigma_{k,N}=(k/N)^\alpha$ converges to $\mathcal L(U^\alpha)$ by
Riemann sums.  For $\alpha=0$ the profile is constant.  For $\alpha>0$,
uniform continuity of $x\mapsto x^\alpha$ on $[0,1]$ gives
\[
 \max_{k<N}|\sigma_{k+1,N}-\sigma_{k,N}|\longrightarrow0.
\]
The conclusion follows from Theorem~\ref{T3}.
\end{proof}

\begin{figure}[ht!]
\centering
\includegraphics[width=4.2cm]{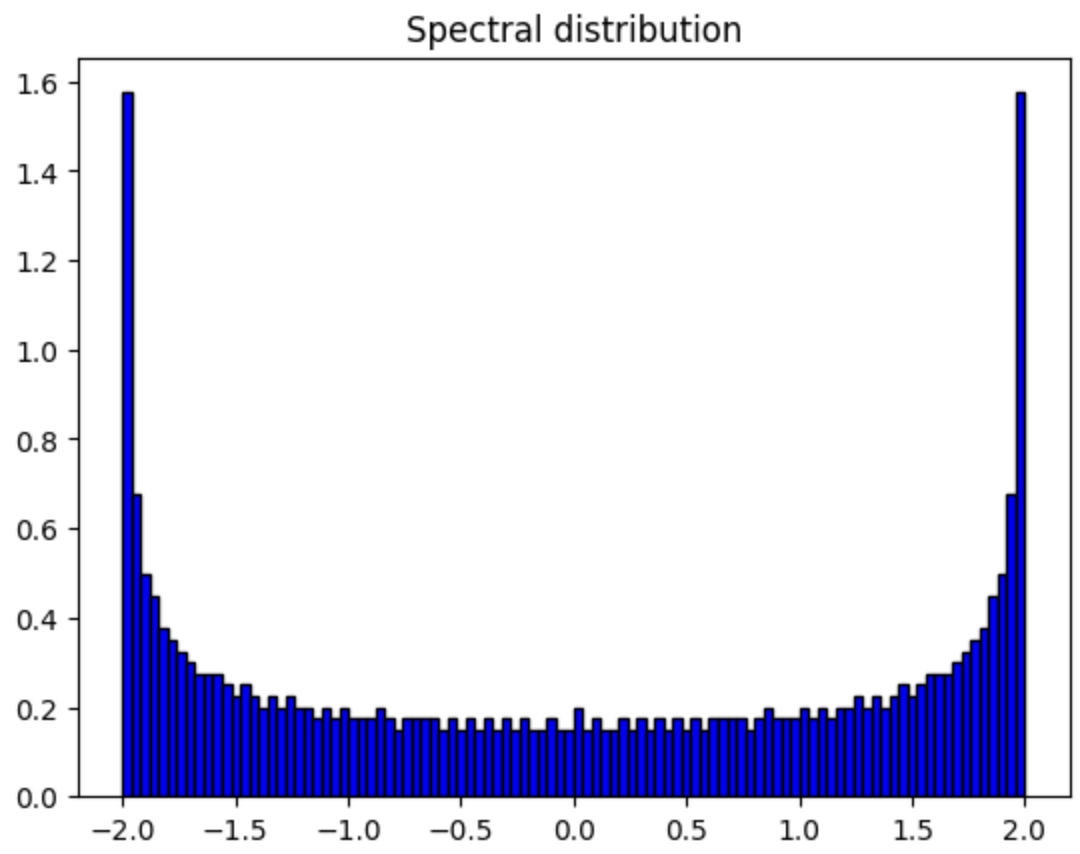}
\includegraphics[width=4.2cm]{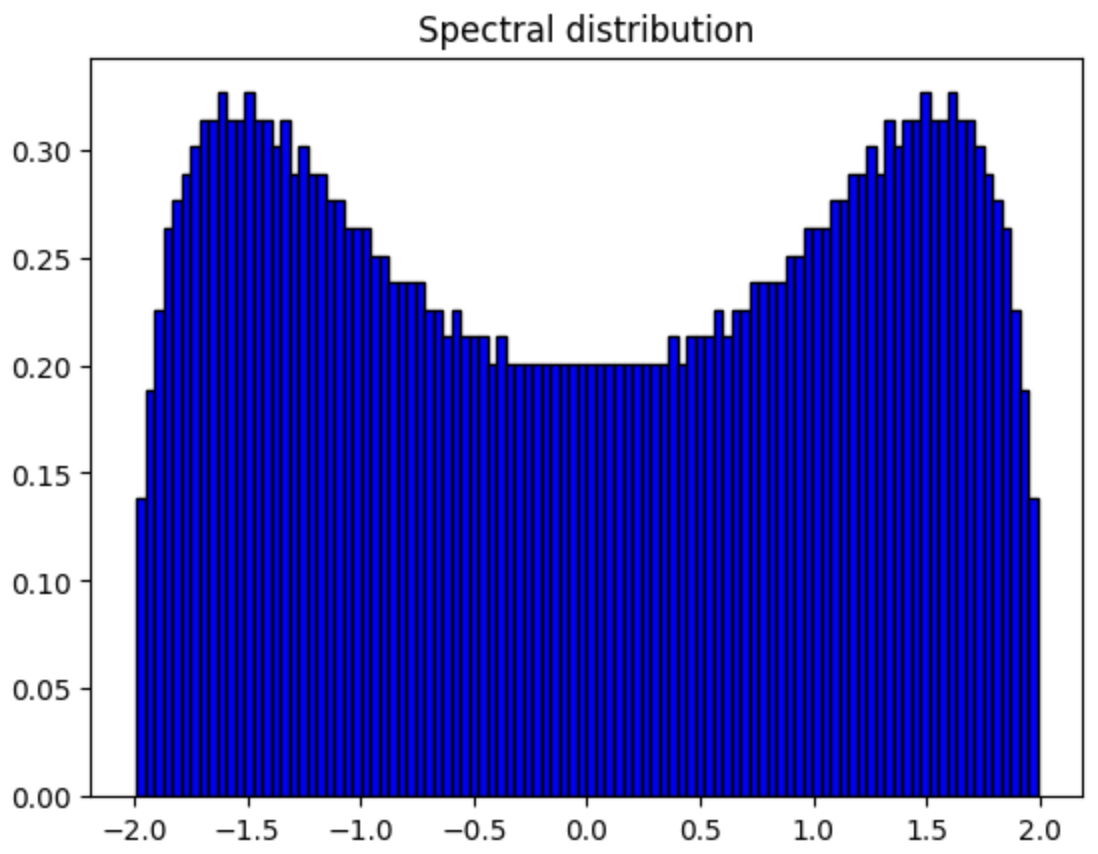}
\includegraphics[width=4.2cm]{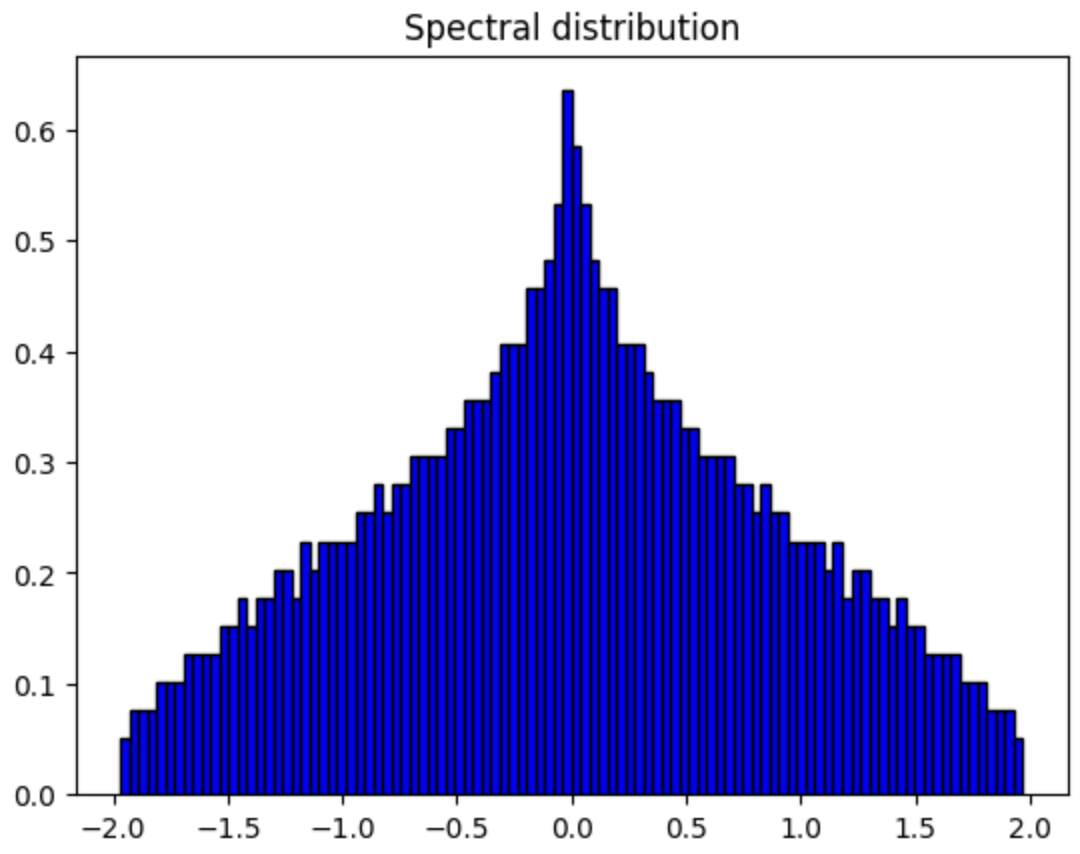}
\caption{Normalized eigenvalue histograms for the power profiles with
$\alpha=0$, $\alpha=0.2$, and $\alpha=0.8$ (from left to right), for
$N=2000$.  The panels visualize the scale mixtures $U^\alpha A$ identified
in the proposition.}
\label{fig:ullman}
\end{figure}

At $\alpha=0$, Figure~\ref{fig:ullman} recovers the arcsine density and its
singular behavior at the two edges.  For $\alpha>0$, the factor
$U^\alpha$ takes all values in $[0,1]$ and superposes copies of the arcsine
law at different scales.  As $\alpha$ increases, $U^\alpha$ puts more mass
near zero, so the edge peaks are progressively replaced by a central
cusp.  The common support remains $[-2,2]$, and symmetry is preserved.
This gives a concrete picture of the scale-mixture mechanism in
Theorem~\ref{T3}, rather than only a change of variables in the moment
formula.

\paragraph{Bernoulli entries}

\begin{proposition}[Limiting distribution for Bernoulli entries]
Let $(b_k)_{k\ge1}$ be i.i.d.\ Bernoulli variables with
\[
 \p(b_k=1)=p,\qquad \p(b_k=0)=q:=1-p,
 \qquad 0\le p<1.
\]
For $k\ge1$, let $A_k$ be the $k\times k$ zero-diagonal tridiagonal
Toeplitz matrix with off-diagonal entries equal to $1$, and let
\[
 \mu_k:=\frac1k\sum_{j=1}^k
 \delta_{2\cos(j\pi/(k+1))}
\]
be its normalized spectral distribution.  Then
\[
 \mu_{X_N}\Rightarrow\mu_{B_p}\qquad\text{almost surely},
 \qquad
 \mu_{B_p}=q^2\sum_{k=1}^{\infty}k\,p^{k-1}\mu_k.
\]
The coefficients in this mixture sum to one.
\end{proposition}

\begin{proof}
Every zero off-diagonal entry disconnects the path, so $X_N$ is an
orthogonal direct sum of matrices of the form $A_k$, apart from the two
boundary components.  An interior component has $k$ vertices exactly when
the edge pattern is
\[
 0,\underbrace{1,\ldots,1}_{k-1\text{ entries}},0.
\]
By the ergodic theorem, the number of such components divided by $N$
converges almost surely to $q^2p^{k-1}$.  Each component contributes $k$
eigenvalues.  The two boundary components have asymptotically negligible
size, and hence
\[
 \mu_{X_N}\Rightarrow q^2\sum_{k\ge1}k\,p^{k-1}\mu_k.
\]
Finally,
\[
 q^2\sum_{k\ge1}k\,p^{k-1}=\frac{q^2}{(1-p)^2}=1.
\]
\end{proof}

When $p=1$, all edges are present and the model is the
constant-coefficient Toeplitz matrix treated in
Proposition~\ref{prop:ullman}; its limit is the arcsine law on $[-2,2]$.

\begin{remark}
If $P_k(z):=\det(zI_k-A_k)$, then
$P_k'(z)/P_k(z)=kS_{\mu_k}(z)$, and therefore
\[
 S_{\mu_{B_p}}(z)
 =q^2\sum_{k=1}^{\infty}p^{k-1}\frac{P_k'(z)}{P_k(z)}.
\]
This agrees with Theorem~\ref{T2}.  Indeed, if
$B\sim\operatorname{Bernoulli}(p)$ and $R'$ is an independent copy of
$R$, then
\[
 R\stackrel d=\frac{1}{z-BR'},
\]
and, for independent copies $R_1,R_2$,
\[
 S_{\mu_{B_p}}(z)
 =\frac{q^2}{z}
  +2pq\,\E\left[\frac{1}{z-R}\right]
  +p^2\,\E\left[\frac{1}{z-R_1-R_2}\right].
\]
\end{remark}

\begin{figure}[ht!]
\centering
\includegraphics[height=5cm]{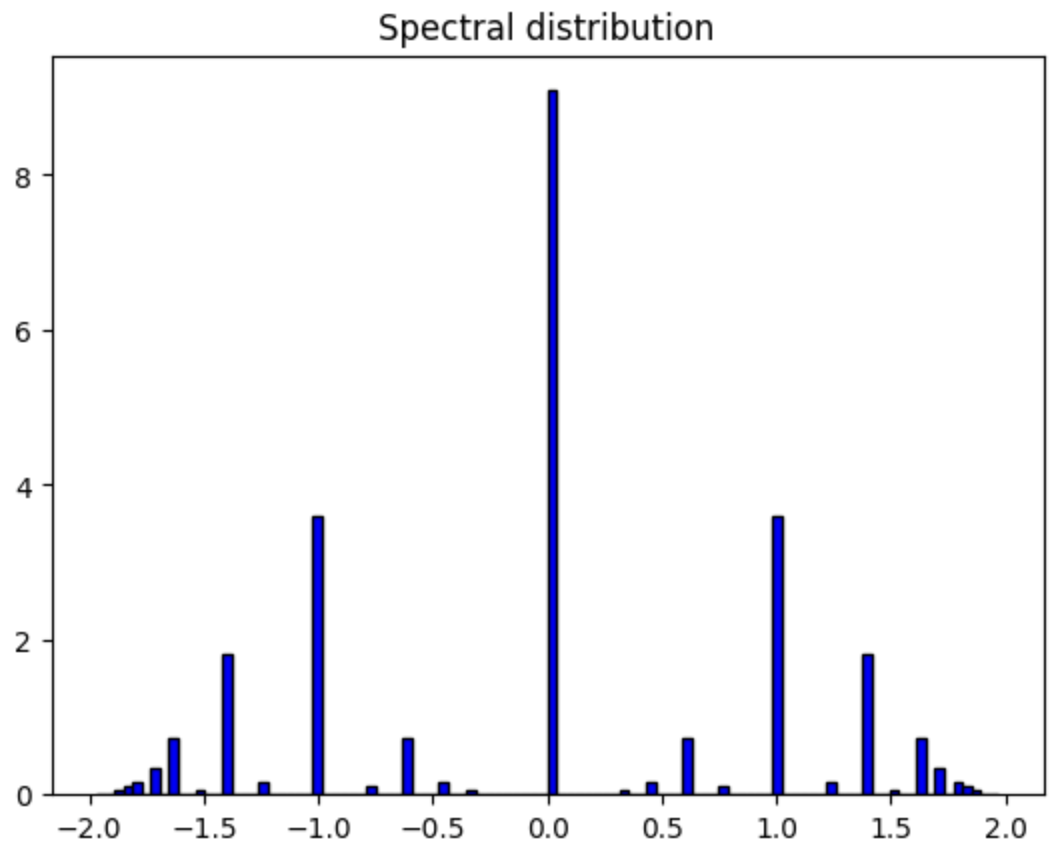}
\includegraphics[height=5cm]{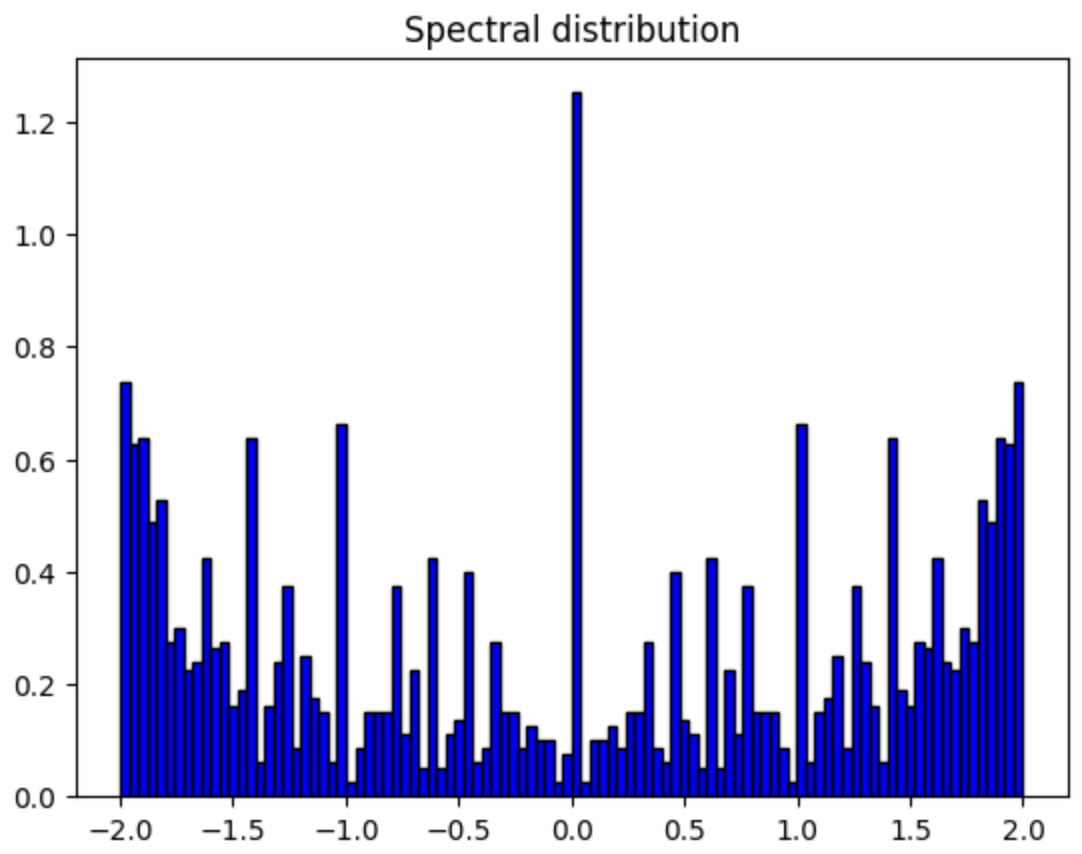}
\caption{Normalized eigenvalue histograms for Bernoulli edges with $p=0.5$
(left) and $p=0.9$ (right).  The spikes are the eigenvalues of the finite
path components appearing in the discrete mixture $\mu_{B_p}$.}
\label{fig:bernoulli}
\end{figure}

The contrast in Figure~\ref{fig:bernoulli} is explained directly by the
block decomposition in the proof.  When $p=0.5$, zeros frequently cut the
matrix into short paths, and a small collection of finite-path eigenvalues
therefore carries substantial mass.  When $p=0.9$, longer runs occur more
often; their eigenvalues form a denser subset of $[-2,2]$, and the envelope
begins to resemble the arcsine law obtained at $p=1$.  The law nevertheless
remains purely atomic for every $p<1$.  In particular, odd path components
produce the conspicuous atom at zero.

\section{Joint limits and the role of the shift}\label{s:4}

\subsection{A deterministic profile and a tridiagonal matrix}

It is useful to complement the scale-mixture theorem by a joint-moment
statement.  Let $\mathcal R_N$ be the permutation matrix which reverses the
standard basis, and set
\[
 \widehat X_N:=\mathcal R_NX_N\mathcal R_N^T,
 \qquad
 D_N:=\operatorname{diag}(\sigma_{1,N},\ldots,\sigma_{N,N}).
\]
Thus the entry $b_k$ of $\widehat X_N$ lies between vertices $k$ and $k+1$.
This reversal preserves the marginal normalized traces and ESD of $X_N$ and
places its coefficients in the coordinate order used by $D_N$.

\begin{theorem}[Joint convergence of the profile and the simple model]
\label{thm:joint-sigma-X}
Assume the hypotheses of Theorem~\ref{thm:scale-mixture}; in particular,
the variables $b_k$ and the deformation array are uniformly bounded.  Let
$T\sim\mu_T$ and $X\sim\mu_b$ be independent.  Then, for every
noncommutative polynomial $P$ in two indeterminates,
\[
 \trN P(D_N,\widehat X_N)
 \longrightarrow \E[P(T,X)]
\]
in expectation and almost surely, where on the right the polynomial is
evaluated at the commuting scalar variables $T$ and $X$.  In particular,
for all $r,q\ge0$,
\[
 \trN(D_N^r\widehat X_N^q)
 \longrightarrow m_r(\mu_T)m_q(\mu_b)
 \qquad\text{almost surely}.
\]
Thus the limiting joint distribution is commutative and $T,X$ are
classically independent.
\end{theorem}

\begin{proof}
By linearity, it is enough to consider a word $W$ containing $r$
occurrences of $D_N$ and $q$ occurrences of $\widehat X_N$.  Using
cyclicity of the normalized trace, and allowing zero exponents, we may write
the word in the form
\[
 W=D_N^{a_0}\widehat X_ND_N^{a_1}\widehat X_N
       \cdots\widehat X_ND_N^{a_q},
 \qquad a_0+\cdots+a_q=r.
\]
The case $q=0$ is simply the marginal convergence following from
\eqref{H3}, so assume $q\ge1$.

For $\gamma=(j_0,\ldots,j_q)\in\mathcal C_q$ define
\[
 H_{\gamma,a}((x_i)_{|i|\le q})
 :=\prod_{s=0}^q x_{j_s}^{a_s}.
\]
Expanding the trace entry by entry, exactly as in
\eqref{eq:path-trace-scale}, gives
\begin{align}\label{eq:path-trace-profile-joint}
 \trN W
 =\frac1N\sum_{\gamma\in\mathcal C_q}
  \sum_{p\in I_N(\gamma)}
  H_{\gamma,a}((\sigma_{p+i,N})_{|i|\le q})
  \prod_{i\in\mathbb Z}b_{p+i}^{\ell_i(\gamma)}.
\end{align}
Thus the order of the $D_N$ factors is retained in the function
$H_{\gamma,a}$; it has not yet disappeared.

We first take expectations.  Independence of the edge variables gives the
same coefficient $c_\gamma$ as in the proof of
Theorem~\ref{thm:scale-mixture}.  Since
\[
 H_{\gamma,a}(t,\ldots,t)=t^{a_0+\cdots+a_q}=t^r,
\]
Lemma~\ref{lem:local-block}, together with the $O_q(1)$ boundary estimate,
implies
\[
 \frac1N\sum_{p\in I_N(\gamma)}
 H_{\gamma,a}((\sigma_{p+i,N})_{|i|\le q})
 \longrightarrow\E T^r.
\]
Consequently,
\[
 \E\trN W
 \longrightarrow \E T^r\sum_{\gamma\in\mathcal C_q}c_\gamma
 =\E T^r\,m_q(\mu_b).
\]
This calculation also explains why all words containing the same numbers
$r$ and $q$ of the two letters have the same limit: condition \eqref{H3}
collapses every finite profile window to a single value.

For the almost-sure assertion, group the centered terms in
\eqref{eq:path-trace-profile-joint} according to the base point $p$.
Each group is uniformly bounded and depends only on
$b_{p-q},\ldots,b_{p+q}$.  The fourth-moment argument used in
Theorem~\ref{thm:scale-mixture} therefore gives
\[
 \E\left|\trN W-\E\trN W\right|^4\le \frac{C_W}{N^2}.
\]
Borel--Cantelli proves almost-sure convergence for each word, and then for
every polynomial.  The common compact support identifies the limiting
functional with the joint law of the commuting variables $(T,X)$.  Since
its mixed moments factor as $\E T^r\E X^q$, these variables are classically
independent.
\end{proof}

Theorem~\ref{thm:joint-sigma-X} clarifies the role of
Theorem~\ref{thm:scale-mixture}.  The simple model contributes the
microscopic random law $\mu_b$, while the deterministic profile contributes
the scalar $T$.  Under the signed local condition \eqref{H3}, the two pieces
become commuting and classically independent at the level of normalized
traces.  This is different from Voiculescu freeness.  For the spectral
scale-mixture result alone, signs can instead be removed as in
Remark~\ref{rem:absolute-sigma}.

\subsection{Several tridiagonal matrices and a common-shift algebra}
\label{s:6}

The joint limits of several simple models have a stationary-operator
realization.  Let $S$ be the bilateral shift on $\ell^2(\mathbb Z)$,
$Se_j=e_{j+1}$.  For $1\le u\le r$, let
$(b_j^{(u)})_{j\in\mathbb Z}$ be a bounded i.i.d.\ sequence, assume that
all these sequences are mutually independent, and define
\[
 B_ue_j=b_j^{(u)}e_j,
 \qquad
 J_u=SB_u+B_uS^*.
\]
On the algebra of covariant random operators generated by $S$ and the
coefficient fields, use the canonical tracial state
\[
\Phi(A):=\E\langle e_0,Ae_0\rangle.
\]
Stationarity makes $\Phi$ tracial.  Indeed, for finite-propagation covariant
operators, expand the matrix coefficient of a product at $e_0$, shift each
summand to the origin, and reindex the finite sum; this gives
$\Phi(AC)=\Phi(CA)$.  The identity extends by continuity.

For a color word $c=(c_1,\ldots,c_q)\in\{1,\ldots,r\}^q$,
$\gamma\in\mathcal C_q$, an edge $\{i,i+1\}$, and a color $u$, put
\[
 \ell_{i,u}(\gamma,c):=
 \#\{1\le s\le q:c_s=u,\
 \{j_{s-1},j_s\}=\{i,i+1\}\}.
\]

\begin{theorem}[Colored-path joint moments]\label{joint}
Let $X_{1,N},\ldots,X_{r,N}$ be independent zero-diagonal simple
tridiagonal models whose respective off-diagonal laws are those of
$b_0^{(1)},\ldots,b_0^{(r)}$, and assume these variables are bounded.
For every color word $c=(c_1,\ldots,c_q)$,
\begin{align*}
 &\trN(X_{c_1,N}\cdots X_{c_q,N})
 \longrightarrow \Phi(J_{c_1}\cdots J_{c_q})\\
 &\qquad=
 \sum_{\gamma\in\mathcal C_q}
 \prod_{i\in\mathbb Z}\prod_{u=1}^r
 \E\!\left[(b_0^{(u)})^{\ell_{i,u}(\gamma,c)}\right]
\end{align*}
in expectation and almost surely.  Only finitely many factors in the
product differ from $1$.
\end{theorem}

\begin{proof}
Conjugate all the finite matrices by the same reversal permutation and
retain the notation $X_{u,N}$.  The normalized trace of their product is
unchanged, and $b_k^{(u)}$ now lies on the edge $\{k,k+1\}$ for every
color $u$.  For $\gamma=(j_0,\ldots,j_q)\in\mathcal C_q$ and a step $s$,
the edge
traversed at that step has relative index
$\min(j_{s-1},j_s)$.  Direct expansion of the normalized trace gives
\begin{align}\label{eq:colored-path-finite}
 &\trN(X_{c_1,N}\cdots X_{c_q,N})\\
 &\quad=\frac1N\sum_{\gamma\in\mathcal C_q}
 \sum_{p\in I_N(\gamma)}
 \prod_{s=1}^q
 b_{p+\min(j_{s-1},j_s)}^{(c_s)}.\notag
\end{align}
There are only finitely many path shapes.  For a fixed $\gamma$ and an
interior base point $p$, group the factors in the last product according to
the pair consisting of their spatial edge and their color.  Independence
over these pairs gives
\[
 \E\prod_{s=1}^q
 b_{p+\min(j_{s-1},j_s)}^{(c_s)}
 =\prod_{i\in\mathbb Z}\prod_{u=1}^r
 \E[(b_0^{(u)})^{\ell_{i,u}(\gamma,c)}].
\]
The right-hand side is independent of $p$.  Since
$|I_N(\gamma)|=N+O_q(1)$, taking expectation in
\eqref{eq:colored-path-finite} proves convergence to the displayed path
sum in the theorem.

We next identify this sum with the stationary-operator expression.  Set
\[
 M_c:=\langle e_0,J_{c_1}\cdots J_{c_q}e_0\rangle.
\]
Expand $M_c$ by choosing at each factor the forward or backward part of
$J_{c_s}$.  A nonzero diagonal coefficient is obtained exactly when these
choices form a path in $\mathcal C_q$, and its expected coefficient is the
product above.  Hence the rooted infinite operator gives precisely the
same sum.

Finally, after centering the contribution based at $p$ in
\eqref{eq:colored-path-finite}, one obtains a uniformly bounded variable
depending only on the colored edge variables with indices between $p-q$
and $p+q$.  Contributions based more than $2q$ apart are independent.
The same count of nonvanishing index quadruples as before yields
\[
 \E\left|\trN(X_{c_1,N}\cdots X_{c_q,N})
 -\E\trN(X_{c_1,N}\cdots X_{c_q,N})\right|^4
 \le\frac{C_{q,c}}{N^2}.
\]
Borel--Cantelli proves the almost-sure limit.  This is the bounded,
zero-diagonal form of \cite[Theorem~4]{P09}; the operator realization is
the stationary counterpart of \cite[Equations~(2.19) and
(2.23)--(2.24)]{P09}.
\end{proof}

As in the discussion following Theorem~4 of \cite{P09}, a few low-order
moments make the dependence particularly transparent.  For two colors
$a,b$, write
$m_k^{(a)}:=\E[(b_0^{(a)})^k]$ and similarly for $b$.  The path formula
gives
\begin{align*}
 \Phi(J_a^2)&=2m_2^{(a)},
 &\Phi(J_aJ_b)&=2m_1^{(a)}m_1^{(b)},\\
 \Phi(J_a^4)&=2m_4^{(a)}+4(m_2^{(a)})^2,
 &\Phi(J_aJ_bJ_aJ_b)
 &=2m_2^{(a)}m_2^{(b)}
   +4(m_1^{(a)})^2(m_1^{(b)})^2,\\
 \Phi(J_a^2J_b^2)
 &=4m_2^{(a)}m_2^{(b)}
   +2(m_1^{(a)})^2(m_1^{(b)})^2,
 &\Phi(J_a^3J_b)
 &=2m_3^{(a)}m_1^{(b)}
   +4m_2^{(a)}m_1^{(a)}m_1^{(b)}.
\end{align*}

The formula explains why ordinary spectral measures do not determine
joint limits.  For one matrix, every edge in a closed path is crossed an
even number of times, so the limiting spectral law is insensitive to signs
of the coefficients.  In a colored path, however, a fixed color may cross
an edge an odd number of times.  Odd coefficient moments can therefore
enter mixed moments even though they are invisible in the individual
spectral laws.

At the operator level,
\[
 J_1+J_2=S(B_1+B_2)+(B_1+B_2)S^*.
\]
Thus the sum of two independent simple models is again a simple model,
with edge law equal to the classical convolution of the two coefficient
laws.  This operation does not descend to a convolution of the limiting
spectral measures alone.  For example, the constant edge laws $\delta_1$
and $\delta_{-1}$ give the same arcsine spectral law, but adding a constant
edge $1$ gives respectively a model with constant edge $2$ and the zero
matrix.  The common-shift data retain the information lost by the marginal
spectral laws.

\subsection{The profile--shift pair and a general convergence theorem}

The deterministic assumptions can be expressed in terms of $D_N$ and the
truncated shift
\[
 S_Ne_k=e_{k+1}\quad(1\le k<N),
 \qquad S_Ne_N=0.
\]
The marginal convergence of $D_N$ is already a consequence of
\eqref{H3}.  More importantly, the same condition is equivalently encoded
by the mixed trace limits
\begin{equation}\label{eq:shift-mixed}
 \frac1N\operatorname{Tr}
 \bigl(f(D_N)S_N^*g(D_N)S_N\bigr)
 \longrightarrow\int f(t)g(t)\,d\mu_T(t),
 \qquad f,g\in C_b(\mathbb R).
\end{equation}
Indeed, the trace on the left is
\[
 \frac1N\sum_{k=1}^{N-1}
 f(\sigma_{k,N})g(\sigma_{k+1,N}).
\]
The endpoint normalization is immaterial.  These mixed products determine
the limiting pair measure.  Equivalently,
if $I_N$ is uniform among the indices whose distance from the boundary
exceeds a fixed $r$, then Lemma~\ref{lem:local-block} gives
\[
 (\sigma_{I_N-r,N},\ldots,\sigma_{I_N+r,N})
 \Rightarrow(T,\ldots,T).
\]
Thus \eqref{H3} is a uniformly rooted local-limit condition.  This point of
view is closely related to the local weak convergence of rooted random
networks, although the present one-dimensional deterministic setting is
considerably simpler; see, for example, \cite{AL07,BL10}.

There is also a useful commutator formulation.  For every bounded
Lipschitz function $f$,
\[
 \|[f(D_N),S_N]\|_{2,N}^2
 =\frac1N\sum_{k=1}^{N-1}
 |f(\sigma_{k+1,N})-f(\sigma_{k,N})|^2,
 \qquad
 \|A\|_{2,N}^2:=\trN(A^*A).
\]
Consequently, \eqref{H3} implies asymptotic shift invariance of $f(D_N)$.
If the profile is uniformly bounded, $f$ may be chosen to agree with the
identity on its common range, and hence
\begin{equation}\label{eq:commutator-two}
 \|[D_N,S_N]\|_{2,N}\longrightarrow0.
\end{equation}
By contrast, the stronger condition \eqref{H3-infty} gives the operator-norm
identity
\[
 \|[D_N,S_N]\|_{\mathrm{op}}
 =\max_{1\le k<N}|\sigma_{k+1,N}-\sigma_{k,N}|
 \longrightarrow0.
\]
Thus, for uniformly bounded profiles and with the marginal convergence
understood, the difference between the local-coherence parts of
\eqref{H3} and \eqref{H3-infty} is the difference between normalized
Hilbert--Schmidt and operator-norm control of the commutator.

In the bounded case, this already determines the complete joint
noncommutative distribution of the profile and the shift.

\begin{proposition}[Joint limit of the profile and the shift]
\label{prop:profile-shift-law}
Assume \eqref{H3} and $\sup_N\|D_N\|_{\mathrm{op}}<\infty$.  Then, for
every noncommutative $^*$-polynomial $P$,
\[
 \trN P(D_N,S_N,S_N^*)
 \longrightarrow
 \int_{\mathbb R}\int_{\mathbb T}
 P(t,z,\overline z)\,dm_{\mathbb T}(z)\,d\mu_T(t),
\]
where $m_{\mathbb T}$ denotes Haar probability measure on the unit circle.
Equivalently, $(D_N,S_N)$ converges in normalized joint $^*$-moments to
$(T,U)$, where $U$ is Haar unitary, $T$ commutes with $U$, and the two
variables are classically independent.
\end{proposition}

\begin{proof}
The matrices $S_N^*S_N$ and $S_NS_N^*$ differ from the identity by rank
one.  Cancelling adjacent occurrences of $S_N$ and $S_N^*$ in a fixed word
therefore changes its normalized trace by $O(N^{-1})$.  Moreover, for
$j\in\mathbb Z$ set
\[
 V_{N,j}:=
 \begin{cases}
  S_N^j,&j\ge0,\\
  (S_N^*)^{-j},&j<0.
 \end{cases}
\]
Then $\trN(V_{N,j})=0$ for every fixed nonzero $j$ and all sufficiently
large $N$.  Thus $S_N$ converges in $^*$-moments to a Haar unitary.  By
\eqref{eq:commutator-two} and the tracial Cauchy--Schwarz inequality, moving
one occurrence of $D_N$ across one occurrence of $S_N$ or $S_N^*$ changes
the normalized trace of a fixed word by $o(1)$.  Repeating this operation
and then making the preceding cancellations reduces the word, up to
$o(1)$, to $D_N^rV_{N,j}$.  Its normalized trace is zero when $j\ne0$,
whereas for $j=0$ it converges to
$\int t^r\,d\mu_T(t)$ by the marginal consequence of \eqref{H3} and the
uniform bound.  This is exactly the displayed product state.
\end{proof}

\medskip
We next answer a more general question.  Put
\[
 B_N:=\operatorname{diag}(b_1,\ldots,b_{N-1},0).
\]
After reversal of the basis, the zero-diagonal matrices have the exact
representations
\begin{equation}\label{eq:shift-representation}
 \widehat X_N=S_NB_N+B_NS_N^*,
 \qquad
 \widehat X_N^\sigma=S_ND_NB_N+B_ND_NS_N^*.
\end{equation}
Thus the deformation is an algebraic expression in the profile, the shift,
and an independent i.i.d. diagonal field.  It is not literally the product
$D_N\widehat X_N$, which would generally be non-Hermitian.

Whenever the profiles are uniformly bounded and $(D_N,S_N)$ has a limiting
joint $^*$-distribution, the same data encode a stationary law.  If
$M:=\sup_N\|D_N\|_{\mathrm{op}}$, define, for fixed $r$,
\[
 \rho_{N,r}:=\frac1{N-2r}\sum_{p=r+1}^{N-r}
 \delta_{(\sigma_{p-r,N},\ldots,\sigma_{p+r,N})}.
\]
Every polynomial moment of $\rho_{N,r}$ is a mixed $^*$-moment of
$(D_N,S_N)$ up to an $O_r(N^{-1})$ boundary term.  Joint $^*$-moment
convergence and Stone--Weierstrass therefore give a limiting measure
$\rho_r$ on $[-M,M]^{2r+1}$.  These limits are consistent and shift
invariant, and the Kolmogorov extension theorem therefore gives a
stationary law $\rho$ on $[-M,M]^{\mathbb Z}$.

\begin{theorem}[Shift-decorated convergence]
\label{thm:shift-decorated}
Let $(b_k)_{k\ge1}$ be i.i.d.\ real random variables, independent of the
deterministic profile, and suppose that
\[
 \sup_{N\ge1}\|D_N\|_{\mathrm{op}}<\infty,
 \qquad \|b_1\|_\infty<\infty,
\]
and that $(D_N,S_N)$ converges in normalized joint $^*$-moments.  Then the
empirical spectral distribution of
\[
 J_N:=S_ND_NB_N+B_ND_NS_N^*
\]
converges weakly almost surely to a deterministic compactly supported
probability measure $\mu_{\rho,b}$, and the expected ESD converges weakly
to the same measure.

More explicitly, for a nonzero finitely supported family
$\ell=(\ell_i)_{i\in\mathbb Z}$ of nonnegative integers, put
\begin{equation}\label{eq:profile-window-moment}
 L_\ell:=\lim_{N\to\infty}\frac1N
 \sum_{\substack{1\le p\le N\\
                  1\le p+i\le N,\ i\in\operatorname{supp}\ell}}
 \prod_{i\in\mathbb Z}\sigma_{p+i,N}^{\ell_i}.
\end{equation}
Set $L_0:=1$.  The limits in \eqref{eq:profile-window-moment} exist by the
joint $^*$-moment hypothesis.  For every $q\ge0$,
\begin{equation}\label{eq:general-shift-moments}
 m_q(\mu_{\rho,b})
 =\sum_{\gamma\in\mathcal C_q}
   L_{\ell(\gamma)}
   \prod_{i\in\mathbb Z}
   \E\bigl[b_1^{\ell_i(\gamma)}\bigr].
\end{equation}
If \eqref{H3} holds, then
$L_{\ell(\gamma)}=\E T^q$, and the conclusion reduces to
\[
 \mu_{\rho,b}=\mathcal L(TX),
\]
where $X\sim\mu_b$ and $T,X$ are independent.
\end{theorem}

\begin{proof}
We first explain why the deterministic limits in
\eqref{eq:profile-window-moment} are part of the joint law of $(D_N,S_N)$.
For $i\in\mathbb Z$, define the shifted diagonal matrix
\[
 D_N^{[i]}:=
 \begin{cases}
  (S_N^*)^iD_NS_N^i,&i\ge0,\\
  S_N^{-i}D_N(S_N^*)^{-i},&i<0.
 \end{cases}
\]
Away from $O_\ell(1)$ boundary indices, the $p$th diagonal entry of
$D_N^{[i]}$ is $\sigma_{p+i,N}$.  It follows that
\[
 \trN\prod_i(D_N^{[i]})^{\ell_i}
 =\frac1N
 \sum_{\substack{1\le p\le N\\
                  1\le p+i\le N,\ i\in\operatorname{supp}\ell}}
 \prod_i\sigma_{p+i,N}^{\ell_i}.
\]
The left-hand side is a mixed $^*$-moment of $(D_N,S_N)$, which proves the
existence of $L_\ell$.

The case $q=0$ is immediate, so assume $q\ge1$.  We now expand the trace
of $J_N^q$.  Exactly as in
\eqref{eq:path-trace-scale}, a contribution to a diagonal entry is indexed
by a closed nearest-neighbor path $\gamma\in\mathcal C_q$.  Taking
expectations and grouping repeated crossings of each edge gives
\begin{align*}
 \E\trN(J_N^q)
 &=\sum_{\gamma\in\mathcal C_q}
   \left[
    \frac1N\sum_{p\in I_N(\gamma)}
    \prod_i\sigma_{p+i,N}^{\ell_i(\gamma)}
   \right]
   \prod_i\E[b_1^{\ell_i(\gamma)}]\\
 &\longrightarrow
   \sum_{\gamma\in\mathcal C_q}
   L_{\ell(\gamma)}
   \prod_i\E[b_1^{\ell_i(\gamma)}].
\end{align*}
The admissible bases in $I_N(\gamma)$ differ from those in
\eqref{eq:profile-window-moment} by only $O_q(1)$ boundary indices, so the
bracketed term converges to $L_{\ell(\gamma)}$.
This is \eqref{eq:general-shift-moments}.

For almost-sure convergence, the contribution rooted at $p$ is a bounded
function of $b_{p-q},\ldots,b_{p+q}$.  After centering, these variables
have a dependency range depending only on $q$.  In the fourth-moment
expansion, a term vanishes whenever one of its four root neighborhoods is
disjoint from the other three; only $O_q(N^2)$ quadruples remain.  Hence
\[
 \E\left|\trN(J_N^q)-\E\trN(J_N^q)\right|^4=O_q(N^{-2}).
\]
Borel--Cantelli proves almost-sure convergence for every $q$.  Finally,
\[
 \|J_N\|_{\mathrm{op}}
 \le2\sup_N\|D_N\|_{\mathrm{op}}\,\|b_1\|_\infty,
\]
so the limiting moments determine a compactly supported probability
measure and moment convergence implies weak convergence.

Under \eqref{H3}, Lemma~\ref{lem:local-block} and boundedness give
\[
 \frac1N\sum_{p\in I_N(\gamma)}
 \prod_i\sigma_{p+i,N}^{\ell_i(\gamma)}
 \longrightarrow\E T^{\sum_i\ell_i(\gamma)}=\E T^q.
\]
Substitution in \eqref{eq:general-shift-moments} yields
$m_q(\mu_{\rho,b})=\E T^q\,m_q(\mu_b)$, which identifies the limit as
$\mathcal L(TX)$.
\end{proof}

The theorem has an equivalent stationary-operator interpretation.  Let
$(\xi_j)_{j\in\mathbb Z}$ have the stationary law $\rho$ constructed
above.  On an independent probability space let
$(\beta_j)_{j\in\mathbb Z}$ be i.i.d. with the law of $b_1$, put
$c_j:=\xi_j\beta_j$, and define the Jacobi operator
\[
 (\mathcal Jf)(j)=c_jf(j+1)+c_{j-1}f(j-1).
\]
Then, for every bounded continuous function $h$, the limiting measure is
the averaged spectral measure at the root,
\begin{equation}\label{eq:general-density-of-states}
 \int h(x)\,d\mu_{\rho,b}(x)
 =\E\langle e_0,h(\mathcal J)e_0\rangle.
\end{equation}
This is the density-of-states formulation familiar for stationary Jacobi
operators; see, for example, \cite{CS83}.  Under \eqref{H3}, the stationary
profile is the constant process $\xi_j=T$, which is precisely why the
general limit collapses to the scale mixture $TX$.

The ESD of $D_N$ alone cannot contain this positional information.  For
example, the block profile consisting of $N/2$ entries equal to $1$
followed by $N/2$ entries equal to $2$, and the alternating profile
$1,2,1,2,\ldots$, have the same limiting ESD
$\tfrac12(\delta_1+\delta_2)$.  The block profile satisfies \eqref{H3},
whereas the alternating profile does not.  When $b_k\equiv1$, their
limiting fourth spectral moments are, respectively,
\[
 6\,\frac{1^4+2^4}{2}=51,
 \qquad
 2\,\frac{1^4+2^4}{2}+4(1^2 2^2)=33.
\]
The general theorem therefore confirms that the joint law of $(D_N,S_N)$,
rather than the spectral law of $D_N$ alone, is the natural deterministic
datum.

For the compact moment proof, convergence of all the closed-path averages
in \eqref{eq:profile-window-moment} is the proof-minimal condition.  Joint
$^*$-moment convergence of $(D_N,S_N)$ is a transparent sufficient
condition, and \eqref{H3} is the locally constant special case used in the
main theorem.  Without uniform boundedness, weak convergence in
\eqref{H3} need not imply convergence of polynomial moments; the clipping
argument in Theorem~\ref{T3} is what permits the unbounded case.

Finally, if $b$ and the profile are bounded, \eqref{H3} gives the useful
approximation
\[
 \left\|\widehat X_N^\sigma
 -\frac{D_N\widehat X_N+\widehat X_ND_N}{2}\right\|_{2,N}^2
 \le\frac{\|b_1\|_\infty^2}{2N}
 \sum_{k=1}^{N-1}|\sigma_{k+1,N}-\sigma_{k,N}|^2
 \longrightarrow0.
\]
Theorem~\ref{thm:joint-sigma-X} then gives another interpretation of the
limit $TX$ in the bounded locally constant case.

\subsection{Comparison with Voiculescu asymptotic freeness}

For dense Wigner matrices, or under Haar conjugation, a deterministic
matrix sequence with a limiting noncommutative distribution becomes
asymptotically free from the independent random matrix under the standard
boundedness or moment hypotheses; see \cite{VDN92,AGZ10}.  Dense randomness
or Haar conjugation mixes the coordinate basis.  A tridiagonal matrix,
instead, retains the geometry of the one-step shift, so the order of the
diagonal entries remains visible.

The appropriate deterministic datum here is therefore not merely the ESD
of $D_N$, but its joint behavior with $S_N$, as in
\eqref{eq:shift-mixed}.  Theorem~\ref{thm:shift-decorated} proves that
convergence of this joint $^*$-distribution is sufficient for convergence
of the bounded zero-diagonal deformed model; Lemma~\ref{deform} restores a
diagonal satisfying (H2).  Under \eqref{H3}, the profile--shift limit
collapses to the commuting pair $(T,U)$ of
Proposition~\ref{prop:profile-shift-law}, and
Theorem~\ref{thm:joint-sigma-X} gives a commuting, classically independent
pair $(T,X)$ rather than a free pair.  For several tridiagonal matrices,
the common shift instead produces the colored-path moments of
Theorem~\ref{joint}.  These facts explain both the analogy with and the
distinction from Voiculescu's theory.  Operator-valued freeness and traffic
distributions provide related languages for retaining such geometric
information; see \cite{Shl96,Male20}.

For comparison, an independent uniform random permutation of one diagonal
array produces classical random pairing; under the usual boundedness or
tightness hypotheses, its limiting joint law is the product of the two
marginal limits.  With no conjugation, the existing joint empirical pairing
is preserved and need not be determined by the two marginal ESDs.  Haar
conjugation is the regime associated with free independence.

\section*{Remarks and open questions}

\begin{remark}[Finite-band matrices]
For fixed-band matrices, the same framework leads to stationary operators
generated by finitely many powers of the shift.  The corresponding moments
are described by paths with a finite set of allowed steps; see
\cite{basak2011limiting}.
Extending the analytic arguments of this paper to growing bandwidths is a
separate problem.
\end{remark}

\begin{question}[Beyond locally constant profiles]
Suppose the empirical laws of finite windows
$(\sigma_{k-r,N},\ldots,\sigma_{k+r,N})$ converge to a nonconstant
stationary process rather than to $(T,\ldots,T)$.  The bounded case is
described by Theorem~\ref{thm:shift-decorated}; under what moment and
uniform-integrability hypotheses does the same conclusion continue to hold
for unbounded profiles and off-diagonal variables?
\end{question}

\begin{question}[Common-shift independence]
Can the colored-path functional of Theorem~\ref{joint} be characterized
axiomatically as a notion of independence for stationary banded operators?
In particular, what is its precise relation to operator-valued or
traffic-type independence in the sense of \cite{Male20}?
\end{question}

\begin{question}[Eigenvector geometry]
How does the distribution of the eigenvector matrix of a tridiagonal
ensemble reflect the common-shift structure?  It would be interesting to
identify regimes interpolating between coordinate pairing and the
Haar-mixing behavior underlying Voiculescu asymptotic freeness.
\end{question}

\begin{question}[Gaussian field]
To better understand the Stieltjes transform of certain tridiagonal models,
one could aim to establish a central limit theorem on the entire upper
half-plane, yielding a Gaussian field under suitable assumptions.  Related
Gaussian and log-correlated field asymptotics for the characteristic
polynomial of the $G\beta E$ were obtained in \cite{LP25}.  Building on the
Gaussian CLT results in \cite[Theorems~6--7]{P09}, one might hope to obtain a
Stieltjes-transform field through a Gaussian analytic function framework.
\end{question}

\section{Acknowledgements}
We thank the reviewers for comments that substantially improved the paper,
and Thomas Buc-d'Alch\'e for his interest in and discussions of the topics
considered here.

\providecommand{\bysame}{\leavevmode\hbox to3em{\hrulefill}\thinspace}
\providecommand{\MR}{\relax\ifhmode\unskip\space\fi MR }
\providecommand{\MRhref}[2]{%
  \href{http://www.ams.org/mathscinet-getitem?mr=#1}{#2}
}
\providecommand{\href}[2]{#2}

\end{document}